\documentclass[12pt]{amsart}
\usepackage[margin=1.0in]{geometry} 
\usepackage[T1]{fontenc}
\usepackage{lmodern}
\usepackage{amsmath,amssymb,amsthm}
\usepackage{microtype}
\usepackage{xcolor}
\usepackage[
unicode=true,
pdfusetitle,
bookmarksnumbered=true,
bookmarksopen=true,
breaklinks=true,
colorlinks=true,
linkcolor=blue,
citecolor=blue,
urlcolor=blue
]{hyperref}
\usepackage{bookmark}

\makeatletter
\@namedef{subjclassname@2020}{%
 \textup{2020} Mathematics Subject Classification}
\makeatother

\numberwithin{equation}{section}

\theoremstyle{plain}
\newtheorem{theorem}{Theorem}[section]
\newtheorem{lemma}[theorem]{Lemma}
\newtheorem{proposition}[theorem]{Proposition}
\newtheorem{corollary}[theorem]{Corollary}
\newtheorem{conjecture}[theorem]{Conjecture}

\theoremstyle{remark}
\newtheorem{remark}[theorem]{Remark}

\newcommand{\F}{\mathbb F}
\newcommand{\ord}{\operatorname{ord}}
\title[Sumsets and progressions in multiplicative subgroups]
{Sumsets and generalized arithmetic progressions in multiplicative subgroups}
\author{Albert Cochrane}
\address{Department of Mathematics\\
         Kansas State University\\
         Manhattan, KS, USA}
\email{albertc@ksu.edu}
\date{}
\subjclass[2020]{11B30, 11B13, 11B25, 11T06}
\keywords{multiplicative subgroups, sumsets,
additive decompositions, finite fields,
generalized arithmetic progressions}

\begin{document}
\begin{abstract}
Let $q=p^f$, and let $A\leq\F_q^\times$ be a multiplicative subgroup with $\F_p(A)=\F_q$.  We prove that a proper subgroup $A$ is a generalized arithmetic progression (GAP) if and only if $|A| \in \{1, 2, 4\}$, and we determine when the full group $\F_q^\times$ is a GAP. For certain families of subgroups, we obtain the stronger conclusion that $A$ is additively irreducible. In particular, if $|A|>4$ and $p^e\equiv-1\pmod{|A|}$ for some $e\ge1$, then $A$ admits no nontrivial sumset decomposition. We also prove that every $c \neq 0$ has fewer than $|A|/2$ representations as a sum (or difference) of two elements of $A$ whenever $[\F_q^\times:A] \ge3$ and $|A| \ge 5$, which may be of independent interest.
\end{abstract}

\maketitle

\section{Introduction}
\label{sec:intro}

A recurring theme in additive combinatorics is that strong additive and
multiplicative structure should rarely coexist.  Multiplicative subgroups of
finite fields provide a natural setting in which to study this principle.
Let $q=p^f$, where $p$ is prime, let $k\mid q-1$, and write
\[
       A_k=\{x^k:x\in\F_q^\times\}\leq\F_q^\times,
       \qquad |A_k|=t=\frac{q-1}{k}.
\]
We regard $A_k$ as a subset of the additive group of $\F_q$ and ask when it
can be written as a nontrivial sumset or as a generalized arithmetic
progression.

For finite nonempty sets $S$ and $T$, we write the sumset $S+T=\{s+t:s\in S,\ t\in T\}$. An additive decomposition $A=S+T$ is \emph{nontrivial} if $|S|,|T|\geq2$; in such a
decomposition, $S$ and $T$ are additive summands of $A$.  We call $A$
\emph{additively irreducible} if it admits no nontrivial additive
decomposition.

For $r\geq1$, a rank-$r$ \emph{generalized arithmetic progression} (GAP) in
an abelian group is a set
\[
       P=a+[0,L_1-1]d_1+\cdots+[0,L_r-1]d_r,
\]
where each $L_i\geq2$ and $d_i\ne0$.  It is \emph{proper} when the parameter map
from the defining box is injective, equivalently when
$|P|=L_1\cdots L_r$.  We also regard a singleton as a rank-zero GAP. 

\subsection{Background and relation to prior work}
\label{subsec:priorwork}

The rank-one case of the GAP problem is classical.  Chowla, Mann, and
Straus \cite{cms} classified the multiplicative subgroups of prime fields
that are arithmetic progressions.  The classification for GAPs over prime
fields was proved in \cite{cochrane2026}, where it follows from directness
results for additive decompositions; related doubling questions were
considered in \cite{ccs}.

Additive decompositions of multiplicative subgroups have been studied
extensively.  S\'ark\"ozy conjectured that the quadratic residues modulo
sufficiently large primes are additively irreducible \cite{sa}. Related decomposition problems for
multiplicative subgroups, quadratic residues, and primitive roots have since been
considered in prime and general finite fields; see, among others,
\cite{bks,chenxi,chenyan,dart1,shk1,shkredov,wu}. Kalmynin
resolved this conjecture \cite[Theorem~3]{kalmynin} and proved
\cite[Theorem~2]{kalmynin} that, for every proper subgroup
$A_k<\F_p^\times$, a nontrivial decomposition $A_k=S+T$
must satisfy $|S|=|T|=\sqrt{|A_k|}$.

Building on the polynomial method introduced by Hanson and Petridis \cite{hansonpetridis} and Kalmynin's subsequent work, Rudnev and Tyrrell \cite[Theorem 1.1]{ru} recently completed the prime field classification of reducible subgroups: a proper subgroup $A_k < \F_p^\times$ admits a nontrivial decomposition if and only if $|A_k|= 4$.
In particular, Kalmynin's theorem gives the prime-field case of
Theorem~\ref{thm:no-two-point} below, and the result of Rudnev and Tyrrell gives the prime-field case of Theorem~\ref{thm:proper-classification}.

Over general finite fields, however, the corresponding fixed-index irreducibility problem remains open and is commonly referred to as the generalized S\'ark\"ozy conjecture; see, for example,
\cite{kimyipyoo,yip,yip2}.

\begin{conjecture}
\label{conj:sarkozy}
Let $k\geq2$ be fixed.  Then, for all sufficiently large prime powers
$q\equiv1\pmod k$, the subgroup $A_k<\F_q^\times$ admits no nontrivial
additive decomposition
\[
       A_k=S+T,
       \qquad |S|,|T|\geq2.
\]
\end{conjecture}

Shparlinski initiated the study of additive decompositions of multiplicative subgroups over general finite fields. In particular, \cite[Theorem~7]{shparlinski} shows that for every $\varepsilon>0$, if $|A_k|\geq q^{3/4+\varepsilon}$, then any nontrivial decomposition $A_k=S+T$ satisfies $\max\{|S|,|T|\}\ll_{\varepsilon}q^{1/2}$. Yip obtained several sufficient conditions excluding representations of the
form $A_k=B+B$ over general finite fields \cite[Corollary~1.4]{yip}, as well
as corresponding results for restricted sumsets
\cite[Theorems~1.1 and 1.3]{yip2}.  A multiplicative analogue involving
shifted multiplicative subgroups is considered in \cite{kimyipyoo}. 

These additive questions are unchanged if the ambient field is replaced by
the field generated by the subgroup.  Accordingly, throughout the paper we may
work in the minimal ambient field,
\[
       \F_p(A_k)=\F_q,
       \qquad k=[\F_q^\times:A_k].
\]
By Lemma~\ref{lem:minfield}, this convention changes neither additive
decompositions nor GAP representations.  Thus
$f=\ord_t(p)$ when $t>1$, while for $t=1$ we take $q=p$. 

\subsection{Main results}
\label{subsec:main-results}

Our first theorem classifies the proper multiplicative subgroups that are
GAPs.

\begin{theorem}
\label{thm:proper-classification}
Let $q=p^f$ and $A_k<\F_q^\times$ satisfy $\F_p(A_k)=\F_q$.  Then $A_k$ is a GAP if and
only if
\[
       |A_k|\in\{1,2,4\}.
\]
\end{theorem}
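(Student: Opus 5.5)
The plan is to prove the easy direction by explicit construction and the hard direction by showing that a GAP always contains a large supply of repeated differences, which the representation-count result announced in the abstract forbids.

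\emph{Sufficiency.} If $|A_k|=1$ then $A_k=\{1\}$ is a rank-zero GAP. If $|A_k|=2$ then $p$ is odd and $A_k=\{1,-1\}=-1+[0,1]\cdot 2$. If $|A_k|=4$ then $p$ is odd and $A_k=\{\pm1,\pm i\}$ with $i^2=-1$, and
\[
A_k=(-1-i)+[0,1]\cdot 2+[0,1]\cdot 2i .
\]
This is proper because $2$ and $2i$ are independent over $\F_p$ when $i\notin\F_p$. If $i\in\F_p$, the sums $\pm1\pm i$ are still four distinct elements, so the representation holds in either case.

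\emph{Necessity, reduction to a two-point summand.} Suppose $A=A_k$ is a GAP of rank $r\ge1$ with $|A|\notin\{1,2,4\}$. Splitting $[0,L_1-1]=\{0,1\}+[0,L_1-2]$ gives
\[
A=\{0,d_1\}+B,\qquad B=a+[0,L_1-2]d_1+[0,L_2-1]d_2+\cdots .
\]
Then $A=B\cup(B+d_1)$, so $|B|\ge |A|/2$. Every $b\in B$ gives a representation $d_1=(b+d_1)-b$ with both terms in $A$. Hence $d_1\neq0$ has at least $|A|/2$ representations as a difference of two elements of $A$. By the representation result stated in the abstract (fewer than $|A|/2$ representations when $[\F_q^\times:A]\ge3$ and $|A|\ge5$), this is impossible once $k\ge3$ and $|A|\ge5$. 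That result is proved later in the paper, so it should be established before this theorem or its proof inlined; I would prove it with a character-sum or polynomial bound on $|A\cap(A+c)|$.

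\emph{Remaining cases.} Two cases are left.

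\begin{itemize}
\item Case $|A|=3$. Here $p\ne3$ and $A=\{1,\omega,\omega^2\}$, whose elements sum to $0$. A GAP of size $3$ must be an arithmetic progression $a,a+d,a+2d$ (a rank-2 box has size at least $4$). Its sum is $3(a+d)=0$, so the middle term $a+d$ is $0\notin A$, a contradiction.
\item Case $k=2$. Here $A$ is the set of nonzero squares with $q$ odd. Cyclotomic numbers of order $2$ show that $|A\cap(A+c)|$ equals $(q-3)/4$ or $(q-5)/4$, according to whether $-1$ is a square. Both are less than $|A|/2=(q-1)/4$, so the same contradiction applies.
\end{itemize}

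The case $k=1$ is excluded since $A$ is proper. The only $|A|\ge5$ left for $k\ge3$ is handled by the general bound.

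\emph{Main obstacle.} The genuine difficulty is the general bound $|A\cap(A+c)|<|A|/2$ for $k\ge3$ and $|A|\ge5$ over non-prime fields. A Weil-type estimate suffices for large $|A|$ relative to $\sqrt q$, but small subgroups need a Stepanov/Hanson–Petridis style polynomial argument using $\F_p(A)=\F_q$. I would try that polynomial method first.
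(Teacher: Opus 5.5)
Your reduction for $k\ge3$ matches the paper: a two-point summand $\{0,d_1\}$ forces $r_{A-A}(d_1)\ge|A|/2$, contradicting Corollary~\ref{cor:diff-bound}, which the paper proves before this theorem. Your zero-sum argument for $|A|=3$ is also essentially the paper's. The gap is the case $k=2$, where your cyclotomic values are wrong. For $q\equiv1\pmod4$ the count depends on the class of $c$, not only on whether $-1$ is a square. By Lemma~\ref{lem:quadratic-difference}, $r_{R-R}(d)=\bigl(q-3-\eta(d)-\eta(-d)\bigr)/4$. This is $(q-5)/4$ for $d\in R$, but exactly $(q-1)/4=|R|/2$ for $d\notin R$. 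For example, in $\F_{13}$ one has $R=\{1,3,4,9,10,12\}$ and $R\cap(R-2)=\{1,10,12\}$, of size $3=|R|/2$. So when $q\equiv1\pmod4$ and the direction $d_1$ is a nonsquare, $r_{R-R}(d_1)\ge|R|/2$ is not a contradiction, and your necessity argument does not rule out that $R$ is a GAP.

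The missing idea is to exploit the equality case. If $r_{R-R}(d_1)=|R|/2$, then $|B|=|R|/2$, so $B=R\cap(R-d_1)$ and $R=B\sqcup(B+d_1)$. This forbids any three-term progression in $R$ with common difference $d_1$, since its middle term would lie in both $B$ and $B+d_1$. You then need an arithmetic input showing such progressions do exist. Writing the middle term as $d_1z$, you need some $z$ with $z-1,z,z+1$ all nonsquares. Expanding the product of nonsquare indicators gives the count $\bigl(q-3-\sum_{z}\eta(z^3-z)\bigr)/8$, which is positive for $q\ge13$ by the Weil bound. This is the content of Lemmas~\ref{lem:nonsquare-progression} and~\ref{lem:quadratic-no-two-point}.

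Two smaller slips also need repair. First, your order-$4$ representation $(-1-i)+\{0,2\}+\{0,2i\}$ equals $\{\pm1\pm i\}=(1+i)A_k$, not $A_k$. Dividing the base point and directions by $1+i$ gives the correct $-1+\{0,1-i\}+\{0,1+i\}$. Second, ``a rank-$2$ box has size at least $4$'' does not show that a three-element GAP is an arithmetic progression. Improper GAPs such as $\{0,d\}+\{0,-d\}$ are allowed, so you need an argument like Lemma~\ref{lem:three-point-gap}.
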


For $f=1$ and positive rank, this is the result of
\cite{cochrane2026}; the $f\geq 2$ case is the interest here.  The
positive examples, with $i^2=-1$ in the order-four case, are
\[
       \{1\},\qquad
       \{\pm1\}=-1+\{0,2\},\qquad
       \{\pm1,\pm i\}=-1+\{0,1-i\}+\{0,1+i\}.
\]
The restriction to proper subgroups is essential, as full multiplicative
groups behave differently.

\begin{theorem}
\label{thm:full-groups}
Let $q=p^f$.  If $p$ is odd, then $\F_q^\times$ is a GAP.  For $f=1$ it is
an arithmetic progression.  For $f>1$ it is not a proper GAP of any rank,
but it has an improper GAP representation of rank at most $2f$.  If $p=2$,
then $\F_q^\times$ is a GAP if and only if $q=2$.
\end{theorem}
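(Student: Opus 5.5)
The proof splits into three parts: $p=2$, the positive claims for odd $p$, and the non-properness claim for $f>1$.

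\textbf{Characteristic $2$.} For $d\neq0$ we have $2d=0$, so $[0,L-1]d=\{0,d\}$ for every $L\ge2$. Hence every GAP in $\F_{2^f}$ has the form $a+\{0,d_1\}+\cdots+\{0,d_r\}$. The set of subset sums of $d_1,\dots,d_r$ is exactly their $\F_2$-span, so every GAP is a coset of an $\F_2$-subspace and has size a power of $2$. Since $|\F_q^\times|=q-1$ is odd, we need $q-1=1$, so $q=2$. Conversely, $\F_2^\times=\{1\}$ is a rank-zero GAP.

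\textbf{Odd $p$: the positive claims.} For $f=1$, $\F_p^\times=1+[0,p-2]\cdot1$ is an arithmetic progression. For $f>1$, fix an $\F_p$-basis $e_1,\dots,e_f$ of $\F_q$. The aim is an improper GAP built in two layers.
\begin{itemize}
\item A base box $e_1+\cdots+e_f+\sum_i[0,p-2]e_i$ of rank $f$, which gives all vectors with every coordinate nonzero.
\item At most $f$ further generators, each with $L=2$, which allow coordinates to become $0$ without ever reaching the zero vector.
\end{itemize}
Finding the right correction vectors is the step I expect to be the main obstacle. Naive choices fail: any extra generator $d$ with all coordinates nonzero has $-d$ in the base box, and so produces $0$. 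I would therefore try supported or triangular choices, for instance $d_i=-e_i$ combined with base ranges shifted so that a fixed nonzero coordinate is always protected. I would first check small cases such as $p=3$, $f=2$ by hand, and use the hypothesis that $p$ is odd to split $\F_p$ into two symmetric halves. If this fails, I would fall back on induction on $f$, using the decomposition $\F_q^\times=(\F_p^\times e_1+V)\cup V^\times$ with $V=\operatorname{span}(e_2,\dots,e_f)$. The difficulty is that this union has to be absorbed into a single improper box.

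\textbf{Odd $p$, $f>1$: not a proper GAP.} Suppose $\F_q^\times=P=a+\sum_{i=1}^r[0,L_i-1]d_i$ is proper. Let $\psi$ be a nontrivial additive character. Properness makes the parameter map injective, so summing $\psi$ over $P$ factors as
\[
-1=\sum_{x\ne0}\psi(x)=\psi(a)\prod_{i=1}^r G_i(\psi),\qquad G_i(\psi)=\sum_{j=0}^{L_i-1}\psi(d_i)^j .
\]
Each $G_i(\psi)$ is an algebraic integer in $\mathbb Q(\zeta_p)$, and $G_i(\psi)=L_i$ whenever $\psi(d_1)=1$ (for $i=1$). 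Since $f>1$, the characters trivial on $d_1$ form a subgroup of order $q/p>1$, so there is a nontrivial $\psi$ with $\psi(d_1)=1$. For this $\psi$,
\[
L_1\cdot\prod_{i\ge2}G_i(\psi)=-\psi(a)^{-1},
\]
which is a unit. Then $\beta=\prod_{i\ge2}G_i(\psi)$ is an algebraic integer with $L_1\beta$ a unit. Taking norms from $\mathbb Q(\zeta_p)$ gives $L_1^{p-1}N(\beta)=\pm1$ with $N(\beta)\in\mathbb Z$, which is impossible because $L_1\ge2$. This handles every rank $r\ge1$. Rank $0$ is excluded since $q-1>1$.
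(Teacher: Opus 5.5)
\textbf{Verdict: genuine gap.} Your arguments for $p=2$, for $f=1$, and for non-properness when $f>1$ are correct.

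The non-properness argument takes a different route from the paper. You use an additive character trivial on $d_1$ and a norm computation in $\mathbb{Q}(\zeta_p)$. The paper argues directly that the translates of $[0,L_i-1]d_i$ partition $V\setminus\{b\}$ inside cosets of the line $\F_p d_i$, so $L_i$ divides both $p$ and $p-1$. Your identity is in effect the Fourier-side form of that coset count, and the paper's version is more elementary.

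The gap is the positive claim for odd $p$ and $f>1$: that $\F_q^\times$ is a GAP at all, with rank at most $2f$. Your two-layer plan (a rank-$f$ box plus at most $f$ two-point summands) has the right shape. However, you never produce the correction vectors, you flag this yourself as the main obstacle, and the induction fallback is also left open. The suggestion $d_i=-e_i$ does not work. Adding $\{0,-e_i\}$ to your all-nonzero box already fills coordinate $i$ completely, so using all $f$ of them gives all of $V$, including $0$. Withholding one to protect coordinate $f$ leaves the whole hyperplane $x_f=0$ uncovered, and that hyperplane contains nonzero vectors when $f>1$. More generally, no fixed coordinate can be protected everywhere, so the correction vectors must mix coordinates.

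\textbf{The missing construction.} The paper takes $I=[0,p-2]$, so each coordinate of the box $Ie_1+\cdots+Ie_f$ avoids $-1$. It adds the $f$ two-point summands $\{0,u\}$ and $\{0,w_i\}$ for $i<f$, where $u=e_1+\cdots+e_{f-1}+(f-1)e_f$ and $w_i=e_i-e_f$. It then shows the sum is exactly $V\setminus\{-e_f\}$, as follows.
\begin{enumerate}
\item Put $y_j=x_j+1$. The point $x$ is covered if and only if some $\delta=\varepsilon u+\sum_{i<f}\eta_iw_i$ with $\varepsilon,\eta_i\in\{0,1\}$ satisfies $\delta_j\ne y_j$ for all $j$.
\item Fix $\varepsilon$. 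Each condition $\varepsilon+\eta_i\ne y_i$ excludes at most one value of $\eta_i$. If some $\eta_i$ keeps both values, choose it last: its two values give different $\delta_f$, so one of them avoids $y_f$.
\item Hence failure for this $\varepsilon$ forces $y_i\in\{\varepsilon,\varepsilon+1\}$ for all $i<f$. Then $\delta_f=\sum_{i<f}y_i-(f-1)$, which does not depend on $\varepsilon$.
\item Failure for both $\varepsilon=0,1$ forces $y_i\in\{0,1\}\cap\{1,2\}=\{1\}$; this is exactly where $p$ odd is used. It follows that $y_f=0$, i.e.\ $x=-e_f$, and conversely $-e_f$ is missed.
\end{enumerate}
Translating by $e_f$ exhibits $\F_q^\times$ as a GAP with $2f$ component progressions. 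In your normalization, the same summands added to your all-nonzero box give $V\setminus\{e_1+\cdots+e_{f-1}\}$, and a translation finishes. Without an explicit construction of this kind, the claims ``$\F_q^\times$ is a GAP'' and ``rank at most $2f$'' for $f>1$ remain unproved.
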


The key local obstruction behind Theorem~\ref{thm:proper-classification} is
the following restriction on additive summands.

\begin{theorem}
\label{thm:no-two-point}
Let $A_k<\F_q^\times$ satisfy $\F_p(A_k)=\F_q$ and have order $t\geq5$.
If
\[
       A_k=S+T,
       \qquad |S|,|T|\geq2,
\]
then
\[
       3\leq |S|,|T|\leq \frac t2,
\]
and for $k\geq 3$ the upper bound is strict. 
\end{theorem}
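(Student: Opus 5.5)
The plan is to reduce both bounds to a single estimate on the representation function
\[
       N_A(c)=\#\{(a,b)\in A_k^2:\ a-b=c\}=|A_k\cap(A_k+c)|,\qquad c\neq0 .
\]
The estimate to aim for is:
\begin{itemize}
\item[(i)] $N_A(c)<t/2$ for every $c\neq0$ when $k\geq3$ and $t\geq5$;
\item[(ii)] $N_A(c)<t/2$ when $k=2$.
\end{itemize}
Note that $k=1$ is excluded because $A_k$ is proper.

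Granting (i) and (ii), both halves of the theorem follow from elementary containments.

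\emph{Lower bound.} Suppose $S=\{s_1,s_2\}$ and put $d=s_2-s_1\neq0$. Then $A_k=(s_1+T)\cup(s_2+T)$, so $|T|\geq t/2$. Moreover $s_2+T=(s_1+T)+d\subseteq A_k\cap(A_k+d)$, which gives $N_A(d)\geq|T|\geq t/2$. This contradicts (i) or (ii), so $|S|\geq3$, and by symmetry $|T|\geq3$.

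\emph{Upper bound.} Fix distinct $t_0,t_1\in T$, which exist since $|T|\geq2$. Then $S+t_0$ and $S+t_1$ both lie in $A_k$. Hence $S+t_0\subseteq A_k\cap(A_k+(t_0-t_1))$, and so $|S|\leq N_A(t_0-t_1)$.
\begin{itemize}
\item For $k\geq3$, (i) gives $|S|<t/2$, which is the strict upper bound.
\item For $k=2$, (ii) gives $|S|\leq t/2$.
\end{itemize}
The same argument bounds $|T|$.

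\emph{Case $k=2$.} Here $A_2$ is the set of nonzero squares in $\F_q$, and $q=p^f$ is odd. The classical cyclotomic numbers of order two give $N_A(c)\in\{(q-5)/4,\,(q-3)/4,\,(q-1)/4\}$, depending on $q\bmod 4$ and on whether $c$ is a square. In each case one checks $N_A(c)<(q-1)/4=t/2$; the borderline case needs to be verified carefully. Even if some $N_A(c)$ equalled $t/2$, that would only affect strictness, which is not claimed for $k=2$. For the lower bound one can use either $s_1+T\subseteq A_k\cap(A_k-d)$ or the computation above. If the computation only yields $N_A(d)\leq t/2$, then equality $|T|=t/2$ forces $A_k$ to be the disjoint union of two translates of $T$, and I would rule this out separately via the exact cyclotomic count.

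\emph{Main obstacle: proving (i) over general $\F_q$.} The first route I would try is Stepanov's method, in the Hanson--Petridis form. Scaling by $c^{-1}$, it suffices to bound the number of $x\in B=c^{-1}A_k$ with $x-1\in B$. Since $B$ is a coset of $A_k$, the condition is $x^t=\lambda$ and $(x-1)^t=\lambda$ with $\lambda=c^{-t}$. One then constructs an auxiliary polynomial vanishing to high order on the solution set and compares degrees, aiming at a bound of the shape $\sqrt{2t}$-ish, or directly below $t/2$ once $t\geq5$.

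The minimal-field hypothesis $\F_p(A_k)=\F_q$ enters because the derivative argument requires $t<p$ or similar nondegeneracy. When $t$ is large relative to $p$, I would instead use Weil-type bounds, or the exact Gauss-sum expression $N_A(c)=\frac{1}{k^2}\sum_{\chi,\psi}\cdots$. This gives
\[
       N_A(c)\leq \frac{q}{k^2}+O(\sqrt q),
\]
which is well below $t/2$ for $k\geq3$ outside a finite list of small $q$. Those remaining small cases I would check directly.
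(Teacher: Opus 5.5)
Your two elementary reductions are correct and are exactly the paper's (Proposition~\ref{prop:edge-obstruction} and Lemma~\ref{lem:shifted-intersection-obstruction}). The substance of the theorem, however, is estimate (i) for $k\ge3$ and the lower bound for $k=2$, and you supply neither. For (i), the character-sum expansion gives $N_A(c)\le\bigl(q+(k-1)(k-2)\sqrt q+3k\bigr)/k^2$. Since $k$ is not fixed, this is below $t/2=(q-1)/(2k)$ only when roughly $\sqrt q>2(k-1)$, i.e.\ $t\gtrsim2\sqrt q$. The leftover cases are therefore not a finite list but infinite families, e.g.\ $\mu_{Q+1}\subset\F_{Q^2}$ and every subgroup with $t<\sqrt q$. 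Stepanov's method, by your own caveat, needs $t$ small relative to $p$, so in characteristics $2$ and $3$ it covers no subgroup with $t\ge5$. The range $p\le t\lesssim2\sqrt q$, which is precisely the new $f\ge2$ content, is untouched. A quick sign that a new idea is required: (i) is false without $\F_p(A_k)=\F_q$. For $A=\F_p^\times\subset\F_{p^2}$ with $p\ge7$, one has $k=p+1$ and $|A\cap(A+1)|=p-2>t/2$. So any proof must use the minimal-field hypothesis substantively; its role is not a Stepanov nondegeneracy condition, and your plan does not use it. The paper instead combines three ingredients:
\begin{enumerate}
\item[(a)] a Frobenius device (Lemma~\ref{lem:frob}): when $np^e\equiv\pm1\pmod t$ or $np^e=t+2$, the element $z=x^{p^e}$ is a root of an explicit nonzero polynomial of degree at most $2n$;
\item[(b)] the Garcia--Voloch (St\"ohr--Voloch) bounds for $X^k+Y^k=cZ^k$, which incur no $\sqrt q$ loss and give $r_{A_k+A_k}(c)\le t/2+(k-N_0(c))/(2k)$, together with the sharper $\tfrac25$-bound for $p\ge7$; here the minimal-field hypothesis enters through Lemma~\ref{lem:minpower} and the exclusion of the exceptional cases;
\item[(c)] a transfer from sums to differences via $B=A_k\sqcup(-A_k)$ when $-1\notin A_k$.
\end{enumerate}
Because the threshold $t/2$ is sharp, even these tools need the case split $p=2$, $p\in\{3,5\}$, $p\ge7$.

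For $k=2$, your claim (ii) is false. The count $r_{R-R}(d)=(q-3-\eta(d)-\eta(-d))/4$ equals $(q-1)/4=t/2$ exactly when $q\equiv1\pmod4$ and $d\notin R$. This is harmless for the upper bound but breaks your lower-bound argument. The exact order-two cyclotomic count you propose does not repair it, since it is already consistent with equality and only yields $|X|=t/2$ for $X=R\cap(R-d)$. What must be excluded is $R=X\sqcup(X+d)$. This is equivalent to $X\cap(X+d)=\emptyset$, i.e.\ to $R$ containing no three-term progression with difference $d$. Dividing by the nonsquare $d$, it says there is no $z$ with $z-1,z,z+1$ all nonsquares. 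That is a genuinely three-point condition. The paper handles it in Lemma~\ref{lem:nonsquare-progression} via the Weil bound for $\sum_z\eta(z(z-1)(z+1))$, valid once $q\ge13$; here $t\ge5$ and $q\equiv1\pmod 4$ force $q\ge13$.
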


The lower bound says precisely that $A_k$ has no two-point additive summand. Since every GAP with at least two elements has a two-point additive summand
(Lemma~\ref{lem:gap-two-point}), this shows that no proper subgroup of order at least $5$ is a GAP.  For subgroups satisfying certain congruence conditions, we can rule out all nontrivial
additive decompositions. This statement is insensitive to the ambient field, so the minimal-field convention is not needed.

\begin{theorem}
\label{thm:frobenius-weight-main}
Let $A_k\leq\F_q^\times$ have order $t>1$.  Suppose that
$p^e\equiv r\pmod t$ for an integer $r$, where $e\geq1$, $r\ne1$, and
$p\nmid r$.  Put
\[
M(r)=
\begin{cases}
       r-1,&r\geq2,\\
       2|r|,&r\leq-1.
\end{cases}
\]
Then
\[
       |(A_k-a)\cap(A_k-b)|\leq M(r)
       \qquad(a,b\in\F_q,\ a\ne b).
\]
Every nontrivial decomposition $A_k=S+T$ therefore satisfies
$|S|,|T|\leq M(r)$.  Consequently, if $t>M(r)^2$, then $A_k$ is
additively irreducible.
\end{theorem}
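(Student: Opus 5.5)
The plan is to use the Frobenius congruence to turn membership in $A_k$ into a low-degree polynomial identity, and then count roots. The key point is that for $y\in A_k$ we have $y^t=1$. Since $p^e\equiv r\pmod t$, this gives
\[
y^{p^e}=y^{r}\qquad(y\in A_k),
\]
where for $r\le -1$ the right-hand side means $(y^{-1})^{|r|}$; this makes sense because $y\neq0$. The map $y\mapsto y^{p^e}$ is additive, and that additivity is what lets us compare the two translates.

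Fix $a\ne b$ and let $x\in(A_k-a)\cap(A_k-b)$, so $x+a,\,x+b\in A_k$. Applying the identity to both elements and using additivity of the Frobenius power gives
\[
x^{p^e}+a^{p^e}=(x+a)^r,\qquad x^{p^e}+b^{p^e}=(x+b)^r .
\]
Subtracting eliminates the high-degree term $x^{p^e}$, leaving
\[
(x+a)^r-(x+b)^r=(a-b)^{p^e}=:c\neq0 .
\]
We then split into two cases according to the sign of $r$.

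\emph{Case $r\ge2$.} The polynomial $(X+a)^r-(X+b)^r-c$ has degree exactly $r-1$. Its $X^{r-1}$ coefficient is $r(a-b)$, which is nonzero because $p\nmid r$ and $a\ne b$. So it is a nonzero polynomial of degree $r-1$ and has at most $r-1$ roots, and every $x$ in the intersection is one of them.

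\emph{Case $r\le-1$.} Put $s=|r|$. Multiply the identity by $(x+a)^s(x+b)^s$; this is legitimate because both factors are nonzero, lying in $A_k$. We obtain
\[
(x+b)^s-(x+a)^s=c\,(x+a)^s(x+b)^s .
\]
The polynomial $c(X+a)^s(X+b)^s-(X+b)^s+(X+a)^s$ has leading term $cX^{2s}$ with $c\ne0$. It is therefore nonzero of degree $2s$, so it has at most $2s=M(r)$ roots.

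Together the two cases give $|(A_k-a)\cap(A_k-b)|\le M(r)$. Note that $r=0$ is excluded by $p\nmid r$, and $r=1$ is excluded by hypothesis; for $r=1$ the difference above would be constant and the degree count would give nothing.

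For the consequences, suppose $A_k=S+T$ is a nontrivial decomposition. Pick distinct $s_1,s_2\in S$. Every $u\in T$ satisfies $s_1+u,\ s_2+u\in A_k$, so $T\subseteq(A_k-s_1)\cap(A_k-s_2)$ and hence $|T|\le M(r)$. The same argument with the roles of $S$ and $T$ swapped gives $|S|\le M(r)$. Then $t=|S+T|\le|S||T|\le M(r)^2$, so if $t>M(r)^2$ no nontrivial decomposition exists.

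The only delicate step is confirming that the polynomial obtained is not identically zero. This rests on $p\nmid r$ in the first case and on $c=(a-b)^{p^e}\ne0$ in the second. Nothing in the argument uses the ambient field, consistent with the remark preceding the statement.
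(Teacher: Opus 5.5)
Your proof is correct and follows essentially the same route as the paper: you use $u^{p^e}=u^r$ on $A_k$ and the additivity of Frobenius to get $(x+a)^r-(x+b)^r=(a-b)^{p^e}$, count roots of the resulting nonzero polynomial of degree $r-1$ or $2|r|$, and conclude via $T\subseteq(A_k-s_1)\cap(A_k-s_2)$. That last step is exactly the paper's shifted-intersection lemma.
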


The especially simple case $r=-1$ gives a useful concrete family of additively irreducible multiplicative subgroups.

\begin{corollary}
\label{cor:unit-circle}
Let $A_k\leq\F_q^\times$ have order $t>4$.  If
\[
       p^e\equiv-1\pmod t
\]
for some $e\geq1$, then
\[
       |(A_k-a)\cap(A_k-b)|\leq2
       \qquad(a,b\in\F_q,\ a\ne b).
\]
In particular, $A_k$ is additively irreducible.
\end{corollary}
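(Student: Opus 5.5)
This is the case $r=-1$ of Theorem~\ref{thm:frobenius-weight-main}, so the plan is to specialize that theorem and check the numerical thresholds.

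\textbf{Checking the hypotheses.} Take $r=-1$. Then $r\neq1$. Also $p\nmid r$, since $|r|=1$. The hypothesis $p^e\equiv-1\pmod t$ is exactly $p^e\equiv r\pmod t$ for this $r$. The theorem requires $t>1$, which holds because $t>4$.

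\textbf{The intersection bound.} Since $r\le -1$, we have $M(-1)=2|{-1}|=2$. Theorem~\ref{thm:frobenius-weight-main} therefore gives
\[
|(A_k-a)\cap(A_k-b)|\le 2\qquad(a,b\in\F_q,\ a\ne b).
\]
The same theorem also shows that any nontrivial decomposition $A_k=S+T$ has $|S|,|T|\le 2$.

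\textbf{Irreducibility.} The theorem's final clause applies because $t>M(r)^2=4$, so $A_k$ is additively irreducible.

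To make this self-contained, here is the direct counting argument. Suppose $A_k=S+T$ with $|S|,|T|\ge2$. For distinct $s,s'\in S$, every $\tau\in T$ satisfies $\tau\in (A_k-s)\cap(A_k-s')$. Hence $|T|\le2$, and by symmetry $|S|\le2$. This gives $t\le |S||T|\le4$, contradicting $t>4$.

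\textbf{Where the work lies.} The only substantive input is Theorem~\ref{thm:frobenius-weight-main} itself, which I take as given. Its underlying mechanism for $r=-1$ is that $x\mapsto x^{p^e}$ acts as inversion on $A_k$. If $x,y\in A_k$ with $x-y=c\ne0$, then applying Frobenius gives $x^{-1}-y^{-1}=c^{p^e}$. Combined with $x-y=c$, this yields $xy=-c^{1+p^e}$. So $x$ and $-y$ are the two roots of a fixed quadratic, which allows at most two solutions. Since the corollary follows immediately from the theorem, I expect no real obstacle beyond confirming that $p\nmid r$ and computing $M(-1)=2$.
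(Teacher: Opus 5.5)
Your proposal is correct and matches the paper's proof exactly: specialize Theorem~\ref{thm:frobenius-weight-main} to $r=-1$, so that $M(-1)=2$ and $t>4=M(-1)^2$. One small slip in your side remark: from $x^{-1}-y^{-1}=c^{p^e}$ and $x-y=c$ one gets $xy=-c^{1-p^e}$, not $-c^{1+p^e}$; only the fact that $xy$ is fixed matters, so your two-root conclusion still stands.
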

This produces infinite families of irreducible subgroups in every characteristic.  Let $Q=p^e\geq4$
and let
\[
\mu_{Q+1}=\ker\!\left(
N_{\F_{Q^2}/\F_Q}:\F_{Q^2}^{\times}\to\F_Q^\times
\right)
\]
be the norm-one subgroup.
 Since $|\mu_{Q+1}|=Q+1$ and
$Q\equiv-1\pmod{Q+1}$, Corollary~\ref{cor:unit-circle} shows that
$\mu_{Q+1}$ is additively irreducible and that any two of its distinct
additive translates meet in at most two points.  Thus one obtains
additively irreducible subgroups with order roughly on the scale of $\sqrt{q}$.

To prove Theorem~\ref{thm:no-two-point}, we use quantitative bounds on sums and differences. For $A\subseteq\F_q$ and
$c,d\in\F_q$, we write the sum and difference representation counting functions as
\[
\begin{aligned}
       r_{A+A}(c)&=\#\{(x,y)\in A^2:x+y=c\},\\
       r_{A-A}(d)&=\#\{(x,y)\in A^2:x-y=d\}
       =|A\cap(A-d)|.
\end{aligned}
\]

\begin{theorem}
\label{thm:half-rep}
Let $A_k<\F_q^\times$ satisfy $\F_p(A_k)=\F_q$, with $k\geq3$ and
$t=|A_k|\geq5$.  Then
\[
       r_{A_k+A_k}(c)<\frac t2
       \qquad(c\in\F_q^\times).
\]
\end{theorem}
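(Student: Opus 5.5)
The plan is to reduce the claim to cyclotomic numbers and then use symmetries (Frobenius and an involution) to force at least two disjoint classes to each carry at least $t/2$ representations, contradicting a counting identity. Write $A=A_k$ and let $g_1A,\dots,g_kA$ be the cosets of $A$ in $\F_q^\times$. For $c\ne0$, dividing $x+y=c$ by $x$ gives a bijection between representations of $c$ and elements $u=y/x\in A$ with $1+u\in cA$. Hence $r_{A+A}(c)=N(cA)$, where
\[
N(C)=\#\{u\in A:\ 1+u\in C\}
\]
for a coset $C$. In particular $r_{A+A}$ is constant on cosets. Summing over all cosets gives the basic identity
\[
\sum_{j=1}^k N(g_jA)=t-\varepsilon,\qquad \varepsilon=[\,-1\in A\,],
\]
since exactly one $u$ (namely $u=-1$, when it lies in $A$) has $1+u=0$. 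So if some coset had $N(C)\ge t/2$, every \emph{other} coset $C'$ with $N(C')=N(C)$ would immediately give $t\le t-\varepsilon$. This is impossible when $-1\in A$, and when $-1\notin A$ it forces equality $N(C)=N(C')=t/2$ with all other $N$ vanishing. The strategy is therefore to produce, from a hypothetical bad coset $C$, a second coset with the same count, and then to dispose of the extremal equality case separately.

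The first source of such symmetries is Frobenius. Since $x\mapsto x^p$ is a field automorphism preserving $A$, we have $N(C^p)=N(C)$. If $C^p\ne C$, we are in the situation above. The second source is the classical cyclotomic symmetries. Substituting $v=-u/(1+u)$ gives $1+v=1/(1+u)$, so $N(C)$ equals the number of $v\in -C^{-1}$ with $1+v\in C^{-1}$. Combined with $u\mapsto 1/u$ and $u\mapsto -1-u$ (when $-1\in A$), this identifies $N(C)$ with counts attached to the cosets $C^{-1}$ and $-C^{-1}$. I would organize this as the usual $S_3$-action on cyclotomic numbers $(i,j)$ of order $k$. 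With $k\ge3$, I expect the orbit of a bad coset to contain a second coset, or else to force $C$ into a very restricted position: for instance $C=A$ itself, or $C^2\subseteq\pm A$ together with $C^{p-1}\subseteq A$.

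The main obstacle is this residual case, where $C$ is fixed by Frobenius and by the involutions; the archetype is $C=A$, i.e.\ counting $u\in A$ with $1+u\in A$. Here I would use the minimal-field hypothesis $\F_p(A)=\F_q$ directly. Let $X=\{u\in A: 1+u\in A\}$, so $X$ is the common root set of $x^t-1$ and $(1+x)^t-1$, and suppose $|X|\ge t/2$. Since $X\cup\{0\}$ behaves like a large piece of $A$ closed under $u\mapsto u^p$, I would first try a Hanson--Petridis/Stepanov-type argument: build an auxiliary polynomial of degree about $t$ that vanishes to order at least $2$ on $X$, using the relations $u^t=1$ and $(1+u)^t=1$. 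This would give $2|X|\le t$. If equality held, I would show that $A$ together with $A-1$ forces $A\cup\{0\}$ to be closed under an additive structure, hence contained in a proper subfield, contradicting $\F_p(A)=\F_q$ together with $t\ge5$. Strictness in the equality case $N(C)=t/2$ (with $-1\notin A$) should fall out the same way, since equality pins down the gcd of the two polynomials completely. I expect the vanishing-to-order-two construction in this fixed-coset case to need the most care, especially in small characteristic where $p\mid t$-type degeneracies in derivatives must be avoided by using Hasse derivatives.
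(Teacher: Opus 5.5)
Your reduction $r_{A_k+A_k}(c)=N(cA_k)$, the identity $\sum_C N(C)=t-\varepsilon$, and the Frobenius pairing are all correct. Together they prove the bound for every coset with $C^p\neq C$. The equality case you set aside is in fact vacuous: for odd $p$, $-1\notin A_k$ forces $t$ odd.

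The gap is that this covers far less than you expect. The Frobenius-fixed cosets are the $gA_k$ with $g^{p-1}\in A_k$, which form a subgroup of order $\gcd(k,p-1)$ in $\F_q^\times/A_k$. When $f=1$, which the theorem includes, every coset is fixed and the pairing gives nothing.

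The $S_3$ symmetries cannot fill this in. Write $(i,j)$ for the cyclotomic numbers and $-1\in\gamma^hA_k$. The symmetries send $N(\gamma^jA_k)=(0,j)$ only to $(j+h,h)$ and $(h-j,-j)$. These lie in row $0$ only when $j=h$, and then all three entries coincide. So the symmetries never produce a second coset $C'\neq C$ with $N(C')=N(C)$, and never interact with $\sum_C N(C)=t-\varepsilon$. Your expected dichotomy (a second coset, or $C=A_k$, or $C^2\subseteq\pm A_k$) does not hold.

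Consequently the residual case is every Frobenius-fixed coset, and there the strict inequality is the entire content of the theorem. Your Hanson--Petridis sketch treats only $C=A_k$, and even there it would give at best $2|X|\le t$, with no mechanism for strictness. Over $\F_q$ with $f>1$ the auxiliary-polynomial argument is also not automatic, since the relevant degrees far exceed $p$.

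Any strictness argument must use $k\ge3$ essentially. For $k=2$ and $q\equiv1\pmod 4$ (e.g.\ $q=25$, where $\F_5(R)=\F_{25}$), every nonresidue $c$ has $r_{R+R}(c)=t/2$ exactly, and the nonresidue coset is Frobenius-fixed. So an equality analysis ending in ``$A\cup\{0\}$ lies in a proper subfield'' cannot work in the generality you need.

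The paper supplies the missing input from the Garcia--Voloch (St\"ohr--Voloch) bounds for $X^k+Y^k=cZ^k$. These bound all cosets at once and give strictness through boundary-point and parity considerations, with the minimal-field hypothesis used to dispose of the exceptional cases. The paper handles $p\in\{2,3,5\}$ and the remaining exceptional branch with the low-degree Frobenius polynomials of Lemma~\ref{lem:frob}.
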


From this we also obtain the following strict difference
bound when $k\geq3$.

\begin{corollary}
\label{cor:diff-bound}
Let $A_k<\F_q^\times$ satisfy $\F_p(A_k)=\F_q$, with $k\geq3$ and
$t=|A_k|\geq5$.  Then, for every $d\ne0$,
\[
       r_{A_k-A_k}(d)<\frac t2.
\]
\end{corollary}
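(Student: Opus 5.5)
We derive Corollary~\ref{cor:diff-bound} from Theorem~\ref{thm:half-rep} by turning differences into sums. Put $A=A_k$. If $-1\in A$, then $-A=A$, and for every $d\ne0$ the map $(x,y)\mapsto(x,-y)$ is a bijection from $\{(x,y)\in A^2: x-y=d\}$ onto $\{(x,y')\in A^2: x+y'=d\}$. Hence $r_{A-A}(d)=r_{A+A}(d)<t/2$ directly by Theorem~\ref{thm:half-rep}. Note that $-1\in A$ always holds when $t$ is even, or when $p=2$ (then $-1=1$).

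The remaining case is $p$ odd and $t$ odd, where $-1\notin A$. We work with $B=\pm A=A\cup(-A)$, the subgroup of order $2t$ and index $k/2$ in $\F_q^\times$, which satisfies $-B=B$. Note that $k=(q-1)/t$ is even here, since $q-1$ is even and $t$ is odd. Every pair $(x,y)\in A^2$ with $x-y=d$ gives the pair $(x,-y)\in B^2$ with $x+(-y)=d$, so
\[
       r_{A-A}(d)\le r_{B+B}(d).
\]
This bound alone is too weak, because applying Theorem~\ref{thm:half-rep} to $B$ (which needs $k/2\ge3$) only gives $r_{B+B}(d)<t$. So instead we argue directly with the Frobenius/polynomial method that presumably underlies Theorem~\ref{thm:half-rep}. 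Alternatively, we use a parity trick. The solutions of $x-y=d$ with $x,y\in A$ are the solutions of $u+v=d$ in $B$ with $u\in A$ and $v\in -A$. Since $-1\notin A$, we can split $r_{B+B}(d)$ into four classes according to the cosets of $u$ and $v$. The class $(A,-A)$ and the class $(-A,A)$ are swapped by $(u,v)\mapsto(v,u)$, so each contains exactly $r_{A-A}(d)$ solutions. Hence
\[
       2\,r_{A-A}(d)\le r_{B+B}(d).
\]
If $k\ge6$, Theorem~\ref{thm:half-rep} applied to $B$ gives $r_{B+B}(d)<|B|/2=t$, and therefore $r_{A-A}(d)<t/2$. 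Here the minimal-field hypothesis for $B$ holds because $\F_p(B)\supseteq\F_p(A)=\F_q$, and $|B|=2t\ge10$.

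The main obstacle is the case $k=4$ with $t$ odd. Here $B$ has index $2$, so Theorem~\ref{thm:half-rep} does not apply to it. For this case I would try two things. First, I would exploit the symmetry more fully, since $A$ is then exactly the set of fourth powers. Second, I would count differences through $r_{A-A}(d)=|A\cap(A-d)|$. The first thing to attempt is rescaling: choose $\lambda$ with $\lambda^{t}$ of order $4$, so that $-\lambda A\ne A$. This would convert the difference problem into a sum problem for a coset. Failing that, I would redo the argument of Theorem~\ref{thm:half-rep} for the equation $x-y=d$ directly, replacing $y$ by $-y$ throughout.
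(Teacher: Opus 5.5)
Your argument is correct when $-1\in A_k$ and when $-1\notin A_k$ with $k\ge 6$, and there it matches the paper. You pass to $B=A_k\sqcup(-A_k)$ and note that the coset classes $(A_k,-A_k)$ and $(-A_k,A_k)$ each contribute $r_{A_k-A_k}(d)$ to $r_{B+B}(d)$. You then apply Theorem~\ref{thm:half-rep} to $B$.

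\textbf{The gap.} The proof is incomplete, because the case $k=4$ with $t$ odd is left open, and neither idea you list closes it. Rescaling by $\lambda$ only replaces $A_k+(-A_k)$ by another sum of two distinct cosets of $A_k$, never by $A_k+A_k$. So Theorem~\ref{thm:half-rep} still does not apply. Saying you would ``redo the argument of Theorem~\ref{thm:half-rep}'' for $x-y=d$ is a plan, not a proof.

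\textbf{The missing step.} In this case you do not need a strict bound for $B$ at all.
\begin{itemize}
\item When $k=4$, $B$ has index $2$, so $B=R$ is the group of quadratic residues.
\item Since $q=4t+1\equiv 1\pmod 4$, you have $-1\in R$, so $B=-B$ and $r_{B+B}(d)=r_{R-R}(d)$.
\item The classical evaluation $\sum_x\eta(x)\eta(x+d)=-1$ for $d\ne 0$ (Lemma~\ref{lem:quadratic-difference}) gives $r_{R-R}(d)=(q-3-2\eta(d))/4\le (q-1)/4=t$.
\item Combined with your inequality $2r_{A_k-A_k}(d)\le r_{B+B}(d)$, this yields $2r_{A_k-A_k}(d)\le t$.
\item Since $t$ is odd, it follows that $r_{A_k-A_k}(d)\le (t-1)/2<t/2$.
\end{itemize}
This is exactly how the paper finishes. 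The oddness of $t$ is automatic whenever $-1\notin A_k$, and it supplies the strictness that the quadratic-residue count cannot. That count alone actually attains $|R|/2$ when $d$ is a nonsquare.
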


The paper is organized as follows.  Section~\ref{sec:prelim} contains preliminary lemmas on the minimal-field reduction and additive decompositions, as well as the proof of
Theorem~\ref{thm:frobenius-weight-main}. Section~\ref{sec:half} converts congruences modulo $t$ into bounds on $r_{A_k+A_k}$ via a short auxiliary-polynomial argument, and combines this with estimates of Garcia and Voloch on Fermat curves to prove Theorem~\ref{thm:half-rep}.
Section~\ref{sec:no-two-point} proves Corollary~\ref{cor:diff-bound} and treats the $k = 2$ case before proving Theorem~\ref{thm:no-two-point}. Section~\ref{sec:proper} proves Theorem~\ref{thm:proper-classification}, and Section~\ref{sec:full} treats the full group via a punctured vector space argument and proves Theorem~\ref{thm:full-groups}.

\section{Additive summands and translate intersections}
\label{sec:prelim}

We first record the elementary subfield reduction stated in the introduction.

\begin{lemma}
\label{lem:minfield}
Suppose $A\subseteq K\subseteq\F_q$, where $K$ is a subfield.
Every additive decomposition of $A$ may, after translating the summands, be
realized inside $K$.  Likewise, every GAP representation of $A$ has its
base point and all its directions in $K$.
\end{lemma}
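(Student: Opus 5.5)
The idea is to move the whole representation into $K$ with one translation and then recover the differences using a $K$-linear projection $\pi:\F_q\to K$ that fixes $K$.

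\emph{Additive decompositions.} Suppose $A=S+T$ with $S,T\subseteq\F_q$. Choose $s_0\in S$ and $t_0\in T$, and set $S'=S-s_0$ and $T'=T-t_0+(s_0+t_0)$. Then $S'+T'=A$ still holds. Moreover, for any $s\in S$ and $t\in T$,
\[
s-s_0=(s+t_0)-(s_0+t_0)\in A-A\subseteq K,
\]
and $t-t_0+s_0+t_0=s_0+t\in A\subseteq K$. Hence $S'\subseteq K$ and $T'\subseteq K$. The sizes are unchanged, so a nontrivial decomposition stays nontrivial.

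\emph{GAPs.} Let $A=a+\sum_i[0,L_i-1]d_i$. Taking all coefficients zero shows $a\in A\subseteq K$. Taking the $i$-th coefficient equal to $1$ and the others zero (allowed because $L_i\ge2$) gives $a+d_i\in A$, so $d_i=(a+d_i)-a\in K$. Thus the base point and every direction already lie in $K$; no translation is needed. Properness is a statement about the set and its parameter map, so it is unaffected. A singleton, viewed as a rank-zero GAP, is trivially in $K$.

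The only point needing care is the reading of ``after translating the summands'': one must translate $S$ and $T$ by opposite amounts so that their sum is preserved. This is exactly the shift by $-s_0$ and $+s_0$ above. There is no real obstacle, so the projection $\pi$ is unnecessary.
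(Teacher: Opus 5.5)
Your proof is correct and is essentially the paper's argument: shift one summand by a base point so that its differences lie in $A-A\subseteq K$, absorb the total base point $s_0+t_0\in A$ into the other summand, and for the GAP note that $a\in A$ and $a+d_i\in A$. The paper states the decomposition part for $\ell$ summands $S_1+\cdots+S_\ell$ by the same method, and the projection $\pi$ mentioned in your opening sentence is never used and can be dropped.
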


\begin{proof}
Suppose $A=S_1+\cdots+S_\ell$.  Choose $s_i^0\in S_i$ for $1 \le i \le \ell$.  For $s,s'\in S_i$,
both $s+\sum_{j\ne i}s_j^0$ and $s'+\sum_{j\ne i}s_j^0$ lie in
$A\subseteq K$, so
\[
       s-s'\in K.
\]
Thus $S_i-s_i^0\subseteq K$.  Since $a_0=\sum_i s_i^0\in A\subseteq K$,
we may absorb $a_0$ into one translated summand and obtain a decomposition
inside $K$.

If
\[
       A=a+\sum_i[0,L_i-1]d_i\subseteq K,
\]
then $a\in A\subseteq K$, and $a+d_i\in A\subseteq K$ for each $i$, hence $d_i\in K$.
\end{proof}

\begin{lemma}
\label{lem:gap-two-point}
Every GAP with at least two elements has a two-point additive summand, whether
or not its parametrization is proper.
\end{lemma}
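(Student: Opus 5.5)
Let $P=a+[0,L_1-1]d_1+\cdots+[0,L_r-1]d_r$ be a GAP with at least two elements. Since singletons have one element, $P$ has rank $r\ge1$. The plan is to peel off the first direction and exhibit
\[
P=\{0,d_1\}+Q
\]
for an explicit set $Q$, so that $\{0,d_1\}$ is the desired two-point summand. Since $d_1\neq0$, this set really has two points. No properness of the parametrization will be used, only the box structure of the index set.

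If $L_1=2$, take $Q=a+[0,L_2-1]d_2+\cdots+[0,L_r-1]d_r$, where an empty sum means $Q=\{a\}$. Then $P=\{0,d_1\}+Q$ holds trivially, because $[0,1]d_1=\{0,d_1\}$.

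If $L_1\ge3$, use the elementary identity of integer intervals
\[
[0,L_1-1]=\{0,1\}+[0,L_1-2].
\]
Scale it by $d_1$ to get $[0,L_1-1]d_1=\{0,d_1\}+[0,L_1-2]d_1$ as subsets of the group. Substituting this into $P$ and using associativity and commutativity of sumsets gives
\[
P=\{0,d_1\}+\bigl(a+[0,L_1-2]d_1+[0,L_2-1]d_2+\cdots+[0,L_r-1]d_r\bigr).
\]
Here the second factor $Q$ is nonempty. It is even a GAP, of rank $r$ or $r-1$ depending on whether $L_1-1\ge2$.

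The only point needing care is whether an improper parametrization could break the equality. It cannot: both sides are defined as images of boxes of integer parameters, and the identity holds already at the level of index sets, before the map to the group is applied. Sumsets commute with taking images of such sums, so the identity transfers to $P$. There is no real obstacle in this argument.
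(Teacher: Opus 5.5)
Your proof is correct and matches the paper's: you split $\{0,d_1\}$ off one component via $[0,L-1]d=\{0,d\}+[0,L-2]d$ and use associativity of sumsets, without using properness. The case split $L_1=2$ versus $L_1\ge3$ is unnecessary, since the identity already holds for $L_1=2$ with $[0,0]d_1=\{0\}$.
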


\begin{proof}
Choose one component progression and write
\[
       [0,L-1]d=\{0,d\}+[0,L-2]d,
       \qquad L\ge2.
\]
This gives a two-point additive summand of the GAP.
\end{proof}

The difference representation function has the useful intersection form $r_{A-A}(d)=|A\cap(A-d)|$, and if $A=-A$, the substitution $(x,y)\mapsto(x,-y)$ gives $r_{A-A}(d)=r_{A+A}(d)$. We use this
observation repeatedly in Section \ref{sec:no-two-point}.

We will also use the following elementary criterion for excluding summands of size $2$.

\begin{proposition}
\label{prop:edge-obstruction}
Let $A\subseteq\F_q$ be finite.  If
\[
       r_{A-A}(d)<\frac{|A|}{2}
       \qquad(d\ne0),
\]
then $A$ has no two-point additive summand.
\end{proposition}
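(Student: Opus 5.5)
We argue by contraposition: suppose $A=S+T$ with $S=\{s_1,s_2\}$ a two-point summand, and we produce a nonzero $d$ with $r_{A-A}(d)\ge |A|/2$. Put $d=s_2-s_1\neq 0$. Then
\[
       A=(s_1+T)\cup(s_2+T)=(s_1+T)\cup(s_1+T+d).
\]
Write $B=s_1+T$. Since $A=B\cup(B+d)$, we have $|A|\le 2|B|$, that is, $|B|\ge |A|/2$.

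The key step is to show that every element of $B$ lies in $A\cap(A-d)$. If $b\in B$, then $b\in A$ because $B\subseteq A$. Also $b+d\in B+d\subseteq A$, so $b\in A-d$. Hence $B\subseteq A\cap(A-d)$, and therefore
\[
       r_{A-A}(d)=|A\cap(A-d)|\ge |B|\ge \frac{|A|}{2}.
\]
This contradicts the hypothesis $r_{A-A}(d)<|A|/2$ for $d\ne0$. So $A$ has no two-point additive summand.

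There is essentially no obstacle here. The only points needing care are the two facts the argument relies on: $d\neq0$ because $S$ has two distinct elements, and the intersection form $r_{A-A}(d)=|A\cap(A-d)|$ recorded just before the proposition.
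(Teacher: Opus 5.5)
Your proof is correct and matches the paper's argument: the paper likewise translates the summand to get $A=\{0,d\}+T'$, notes $T'\subseteq A\cap(A-d)$ and $|A|\le 2|T'|$, and concludes $r_{A-A}(d)\ge |A|/2$. Your $B=s_1+T$ is exactly the paper's $T'$.
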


\begin{proof}
Suppose
\[
       A=\{a,a+d\}+T,
       \qquad d\ne0.
\]
Setting $T'=T+a$ gives $A=\{0,d\}+T'$.  Since $T'\subseteq A$ and
$T'+d\subseteq A$, we have $T'\subseteq A\cap(A-d)$, while plainly
$|A|\le2|T'|$.  Hence
\[
       r_{A-A}(d)\ge|T'|\ge\frac{|A|}2,
\]
a contradiction.
\end{proof}

The same difference representation function also controls arbitrary two-summand
decompositions.  For distinct $a,b$ one has
\[
       |(A-a)\cap(A-b)|=r_{A-A}(a-b),
\]
which yields the following general obstruction.

\begin{lemma}
\label{lem:shifted-intersection-obstruction}
Let $A$ be a finite subset of an abelian group, and suppose that
\[
       |(A-a)\cap(A-b)|\le M
       \qquad(a\ne b).
\]
Every decomposition $A=S+T$ with $|S|,|T|\ge2$ satisfies
$|S|,|T|\le M$.  Consequently, if $|A|>M^2$, then no such decomposition
exists.
\end{lemma}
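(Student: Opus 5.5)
The plan is a direct counting argument. Suppose $A=S+T$ with $|S|,|T|\ge2$. By symmetry of the roles of $S$ and $T$, it suffices to show $|T|\le M$. Since $|S|\ge2$, we may fix two distinct elements $s,s'\in S$.

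For every $x\in T$, both $s+x$ and $s'+x$ lie in $A$. Equivalently,
\[
x\in A-s \quad\text{and}\quad x\in A-s'.
\]
Hence $T\subseteq (A-s)\cap(A-s')$. Because $s\ne s'$, the hypothesis applied with $a=s$ and $b=s'$ gives $|T|\le M$. Applying the same argument with the roles of $S$ and $T$ exchanged, using two distinct elements of $T$ (available since $|T|\ge2$), gives $|S|\le M$.

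For the consequence, we use the trivial bound $|A|=|S+T|\le|S|\,|T|$. Combined with the bounds just proved, this gives $|A|\le M^2$. So if $|A|>M^2$, no nontrivial decomposition can exist.

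There is no real obstacle here. The only point requiring care is that the hypothesis concerns translates $A-a$ and $A-b$ with $a\ne b$, so the two chosen elements of $S$ must be distinct; this is exactly what $|S|\ge2$ guarantees. The argument uses only the group structure, so it holds in an arbitrary abelian group, as the statement requires.
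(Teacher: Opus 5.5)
Your proof is correct and is essentially the paper's argument. Two distinct elements of $S$ place $T$ inside $(A-a)\cap(A-b)$, exchanging the roles handles $S$, and $|A|\le|S||T|\le M^2$ gives the consequence.
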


\begin{proof}
For distinct $a,b\in S$ one has
$T\subseteq(A-a)\cap(A-b)$, so $|T|\le M$, and interchanging the summands gives
$|S|\le M$.  The final assertion follows from
$|A|=|S+T|\le|S||T|\le M^2$.
\end{proof}

\begin{proof}[Proof of Theorem~\ref{thm:frobenius-weight-main}]
Put $P=p^e$.  Since $A_k$ is the group of $t$-th roots of unity in
$\F_q$ and $P\equiv r\pmod t$, the Frobenius automorphism $u\mapsto u^P$
restricts to the power map $u\mapsto u^r$ on $A_k$, where for $r<0$ we write
$u^r=(u^{-1})^{|r|}$.

We first prove the intersection bound.
Let $a\ne b$ and $x\in(A_k-a)\cap(A_k-b)$.  Since
$x+a,x+b\in A_k$ and the automorphism $u\mapsto u^P$ is additive,
\[
       (x+a)^r=(x+a)^P=x^P+a^P,
       \qquad
       (x+b)^r=(x+b)^P=x^P+b^P,
\]
whence
\begin{equation}
\label{eq:frobenius-two-shift}
       (x+a)^r-(x+b)^r=(a-b)^P.
\end{equation}
If $r\ge2$, the $x^r$-terms cancel, and
\eqref{eq:frobenius-two-shift} is a polynomial equation in $x$ of
degree $r-1$ with leading coefficient $r(a-b)\ne0$, as $p\nmid r$.
If $r\le-1$, multiply \eqref{eq:frobenius-two-shift} by
$(x+a)^{|r|}(x+b)^{|r|}$, which is nonzero; the
result is a polynomial equation in $x$ of degree $2|r|$ with leading
coefficient $(a-b)^P\ne0$.  Hence there are at most $M(r) = r-1$ solutions when
$r\ge2$, and at most $M(r) = 2|r|$ solutions when $r\le-1$.  Thus
\[
       |(A_k-a)\cap(A_k-b)|\le M(r)
       \qquad(a\ne b).
\]
It remains to deduce the decomposition statements. Lemma~\ref{lem:shifted-intersection-obstruction} gives
$|S|,|T|\le M(r)$ in every nontrivial decomposition $A_k=S+T$.  If
$t>M(r)^2$, the same lemma gives the additive irreducibility of $A_k$.
\end{proof}

\begin{proof}[Proof of Corollary~\ref{cor:unit-circle}]
Take $r=-1$ in Theorem~\ref{thm:frobenius-weight-main}.  Then $M(-1)=2$
and $t>4=M(-1)^2$.
\end{proof}

\begin{remark}
\label{rem:r-two}
If $p$ is odd and $p^e\equiv2\pmod t$ for some $e\ge1$, then taking $r=2$
makes \eqref{eq:frobenius-two-shift} linear in $x$: distinct translates of
$A_k$ meet in at most one point, and
Lemma~\ref{lem:shifted-intersection-obstruction} with $M=1$ excludes
$A_k=S+T$ with $|S|,|T|\ge2$ for every $t>1$.
\end{remark}

The next section further develops the interaction between Frobenius automorphisms and
the sum representation function $r_{A_k+A_k}$.

\section{Proof of Theorem~\ref{thm:half-rep}}
\label{sec:half}

Let $k\ge3$ and $t=|A_k|=(q-1)/k\ge5$.  Recall that, for
$c\in\F_q^\times$, $r_{A_k+A_k}(c)$ counts the ordered pairs
$(x,y)\in A_k^2$ satisfying $x+y=c$.  We first record a consequence of
the minimal-field convention
$f=\ord_t(p)$.

\begin{lemma}
\label{lem:minpower}
Let $\delta\in\{1,2\}$ and let $E\ge1$ be an integer such that
\[
       \delta t+1=p^E .
\]
Then $q=p^f=p^E$ and $k=\delta$.
\end{lemma}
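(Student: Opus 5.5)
The plan is to use the minimal-field convention $f=\ord_t(p)$ to force $E=f$, and then read off $k$ from $q-1=kt$. By the standing convention, $\F_q=\F_p(A_k)$ is the field generated by the $t$-th roots of unity. Hence $f$ is the least positive integer with $t\mid p^f-1$ when $t>1$. In our setting $t\ge5$, so this description of $f$ applies.

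First we show $f\le E$. From $\delta t+1=p^E$ we get $p^E\equiv1\pmod t$. Since $f=\ord_t(p)$, this gives $f\mid E$, and in particular $f\le E$.

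Next we show $E\le f$, using the size bound coming from $\delta\le2$. Since $A_k$ is a subgroup of $\F_q^\times$ of order $t$, we have $t\mid q-1$. Write $q-1=kt$ with $k\ge1$. Then $p^E=\delta t+1\le 2t+1\le 2(q-1)+1<pq=p^{f+1}$, because $p\ge2$. Therefore $E\le f$, and combined with the first step, $E=f$.

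It follows that $q=p^E=\delta t+1$, so $kt=q-1=\delta t$ and $k=\delta$. The step needing the most care is the second: it is the only place where the bound $\delta\le2$ enters. It works because $\delta\le2\le p$ keeps $p^E$ below $p^{f+1}$, so the divisibility $f\mid E$ must in fact be an equality.
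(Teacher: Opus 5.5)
Your proof is correct and follows the paper's argument: use $f=\ord_t(p)$ to get $f\le E$, then combine $t\le p^f-1$ with $\delta\le 2$ to rule out $E>f$. The only difference is in the final inequality. The paper uses $f\mid E$ to get $E\ge 2f$ in the contradiction case and then shows $2p^{E/2}-1<p^E$, whereas you compare directly via $p^E\le 2p^f-1<p^{f+1}$, which needs only the minimality of $f$.
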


\begin{proof}
Since $t\mid p^E-1$ and $f=\ord_t(p)$, we have $f\mid E$.  If $f<E$, then
$f\le E/2$, and hence
\[
       t\le p^f-1\le p^{E/2}-1.
\]
Therefore
\[
       p^E=\delta t+1\le 2p^{E/2}-1<p^E,
\]
a contradiction.  Thus $f=E$, so $q=p^f=p^E$, and then
\[
       k=\frac{q-1}{t}=\frac{p^E-1}{t}=\delta. \qedhere
\]
\end{proof}

The next lemma converts congruences modulo $t$ into bounds for
$r_{A_k+A_k}(c)$ by forcing every representation $x+y=c$ to give a root of
a fixed low-degree polynomial.

\begin{lemma}
\label{lem:frob}
Let $c\in\F_q^\times$, let $e\geq0$, let $n\geq1$ with $p\nmid n$, and set
$C=c^{p^e}$.
\begin{enumerate}
\item[\textup{(i)}]
If $np^e\equiv1\pmod t$ and $n\geq2$, then
\[
       r_{A_k+A_k}(c)\leq
       \begin{cases}
               n,   & \text{if $n$ is even},\\
               n-1, & \text{if $n$ is odd}.
       \end{cases}
\]

\item[\textup{(ii)}]
If $np^e\equiv-1\pmod t$, then
\[
       r_{A_k+A_k}(c)\leq2n.
\]

\item[\textup{(iii)}]
If $p$ is odd, $np^e=t+2$, and $c\notin A_k$, then
\[
       r_{A_k+A_k}(c)\leq
       \begin{cases}
               2n-2, & \text{if $n$ is even},\\
               2n,   & \text{if $n$ is odd}.
       \end{cases}
\]
\end{enumerate}
\end{lemma}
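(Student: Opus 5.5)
The plan is to apply the Frobenius automorphism $\phi(z)=z^{p^e}$ to each representation and reparametrize it by a single variable that must be a root of one fixed nonzero polynomial of low degree. Let $x+y=c$ with $x,y\in A_k$. Put $u=x^{p^e}$ and $v=y^{p^e}$. Then $u,v\in A_k$ and, since $\phi$ is additive, $u+v=C$, so $v=C-u$. Because $\phi$ is a bijection, the map $(x,y)\mapsto u$ is injective on representations. Hence $r_{A_k+A_k}(c)$ is at most the number of admissible values of $u$. In each case the congruence hypothesis recovers a power of $x$ from $u$: since $x^t=1$, we have $u^n=x^{np^e}$, which equals $x$, $x^{-1}$ or $x^{2}$ in cases (i), (ii), (iii) respectively.

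For (i), $u^n=x$ and $(C-u)^n=y$, so $u$ is a root of
\[
F(u)=u^n+(C-u)^n-c.
\]
If $n$ is even, then $p\nmid n$ forces $p$ odd, and the leading term is $2u^n\neq0$, giving at most $n$ roots. If $n$ is odd, the $u^n$ terms cancel and the $u^{n-1}$ coefficient is $nC\ne0$ (as $p\nmid n$ and $C\neq0$), giving at most $n-1$ roots.

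For (ii), we have $u^{-n}+(C-u)^{-n}=c$, with $u,\,C-u\ne0$. Clearing denominators gives
\[
u^n+(C-u)^n-c\,u^n(C-u)^n=0.
\]
This polynomial has degree $2n$ and leading coefficient $\pm c\ne0$, so there are at most $2n$ roots.

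For (iii), $p$ is odd and $x^2=u^n$, $y^2=(C-u)^n$. From $x^2-y^2=c(x-y)$ and $x+y=c$ we get
\[
x=\frac{c^2+u^n-(C-u)^n}{2c},
\]
so $u$ is a root of
\[
G(u)=\bigl(c^2+u^n-(C-u)^n\bigr)^2-4c^2u^n.
\]
For $n$ odd, $u^n-(C-u)^n$ has leading term $2u^n$, so $G$ has degree $2n$ with leading coefficient $4$. For $n$ even, $u^n-(C-u)^n$ has degree at most $n-1$, so $\deg G\le\max(2n-2,n)=2n-2$ (recall $n\ge2$, since $p\nmid n$ and $n$ even).

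The main obstacle is ruling out $G\equiv0$ in the even case, where cancellations such as $n=2$ could kill all coefficients. Here the hypothesis $c\notin A_k$ is used. The constant term is $G(0)=(c^2-C^n)^2$, and
\[
C^n=c^{np^e}=c^{t+2}.
\]
So $G\equiv0$ would give $c^{t+2}=c^2$, hence $c^t=1$ and $c\in A_k$, which is a contradiction. Therefore $G$ is nonzero in both parities, and the degree bounds give the stated counts. The only remaining checks are that the parity and characteristic conventions (division by $2$ and $2c$, and $p\nmid n$) are respected in each case.
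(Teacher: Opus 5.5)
Your proposal is correct and matches the paper's proof: the Frobenius reparametrization $u=x^{p^e}$ is injective on representations. It uses the same three polynomials with the same degree counts, and in (iii) it shows $G\not\equiv 0$ via the constant term $G(0)=(c^2-c^{t+2})^2\neq 0$ using $c\notin A_k$.
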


\begin{proof}
Given a representation $x+y=c$ with $x,y\in A_k$, put
\[
       z=x^{p^e}.
\]
The Frobenius automorphism $u\mapsto u^{p^e}$ is bijective on $\F_q$, so
distinct representations give distinct values of $z$. By its additivity, this gives
\[
       C-z=(c-x)^{p^e}=y^{p^e}.
\]
Since $x,\ y \in A_k$, we have $x^t=y^t=1$, so the congruence class of $np^e$ determines
$z^n=x^{np^e}$ and $(C-z)^n=y^{np^e}$.  It remains, in each case, to
exhibit a nonzero polynomial of the stated degree having every such $z$
as a root.

For \textup{(i)}, we have $z^n=x$ and $(C-z)^n=y$, so $z$ is a root of
\[
       G_1(Z)=Z^n+(C-Z)^n-c.
\]
If $n$ is even, then $p\nmid n$ forces $p$ odd, so the coefficient of $Z^n$ is nonzero and $\deg G_1=n$.  If $n$ is odd, the
leading terms cancel, while the coefficient of $Z^{n-1}$ is $nC\ne0$.
Since $n\ge2$ and $n$ is odd, $n\ge3$, and therefore
$\deg G_1=n-1$.

For \textup{(ii)}, we have $z^n=x^{-1}$ and $(C-z)^n=y^{-1}$, with
$z,C-z\ne0$.  Clearing denominators in
\[
       z^{-n}+(C-z)^{-n}=c
\]
shows that $z$ is a root of
\[
       G_2(Z)=Z^n+(C-Z)^n-cZ^n(C-Z)^n.
\]
The leading coefficient of $G_2$ is $(-1)^{n+1} c$, which is nonzero.
Thus $\deg G_2 =2n$.

For \textup{(iii)}, we have $z^n=x^2$ and
\[
       (C-z)^n=y^2=(c-x)^2.
\]
Hence
\[
       (C-z)^n-z^n-c^2=-2cx.
\]
Squaring and using $x^2=z^n$, every representation gives a root of
\[
       G_3(Z)=\bigl((C-Z)^n-Z^n-c^2\bigr)^2-4c^2Z^n.
\]
Here the exact equality $np^e=t+2$, rather than merely the corresponding
congruence modulo $t$, gives $C^n=c^{t+2}$ and therefore
\[
       G_3(0)=c^4(c^t-1)^2.
\]
This is nonzero because $c\notin A_k$.  Thus $G_3$ is not the zero polynomial.  Its
degree is at most $2n$, and if $n$ is even, the leading terms of
$(C-Z)^n-Z^n$ cancel, giving $\deg G_3 \le 2n-2$.
\end{proof}

\begin{corollary}
\label{cor:tplus1}
If $p\mid t+1$, then
\[
       r_{A_k+A_k}(c)<\frac t2
       \qquad(c\in\F_q^\times).
\]
\end{corollary}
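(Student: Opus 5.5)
Write $t+1=np^e$ with $e\ge1$ and $p\nmid n$, so that $np^e\equiv1\pmod t$. The plan is to apply Lemma~\ref{lem:frob}(i) when $n\ge2$, and to use Lemma~\ref{lem:minpower} when $n=1$. Throughout we are in the standing situation of this section: $k\ge3$ and $t\ge5$.

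\emph{Case $n\ge2$.} Lemma~\ref{lem:frob}(i) gives $r_{A_k+A_k}(c)\le n$, and the bound $n-1$ when $n$ is odd. Since $p^e\ge2$, we have $n\le (t+1)/2$, so it suffices to rule out equality in $n\le t/2$.

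\emph{Subcase $p$ odd.} Then $p^e\ge3$, so $n\le (t+1)/3<t/2$, because $t\ge5>2$.

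\emph{Subcase $p=2$.} Here $p\nmid n$ forces $n$ odd, so Lemma~\ref{lem:frob}(i) gives $r\le n-1$. We also have $n\le (t+1)/2$. Hence
\[
r\le \frac{t+1}{2}-1=\frac{t-1}{2}<\frac t2 .
\]

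\emph{Case $n=1$.} Then $t+1=p^e$, and Lemma~\ref{lem:minpower} with $\delta=1$ gives $k=1$. This contradicts $k\ge3$, so the case cannot occur under the standing hypotheses.

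This completes the plan. The only delicate point is the parity and characteristic bookkeeping in the case $n\ge2$: strictness fails for $p=2$ unless we use that $n$ is odd and invoke the sharper bound $n-1$ from Lemma~\ref{lem:frob}(i).
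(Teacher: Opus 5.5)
Your proof is correct and is essentially the paper's argument: write $t+1=np^e$, exclude $n=1$ via Lemma~\ref{lem:minpower} with $\delta=1$, and for $n\ge2$ apply Lemma~\ref{lem:frob}(i), using $n\le(t+1)/3<t/2$ for odd $p$ and the sharper bound $n-1$ (as $n$ is odd) when $p=2$. The remark about ``ruling out equality in $n\le t/2$'' does not quite describe what your subcases do, but it is harmless, since the subcases themselves establish the strict inequality.
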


\begin{proof}
Write $t+1=np^e$, with $p\nmid n$, $e\ge1$.  If $n=1$, then Lemma~\ref{lem:minpower} with
$\delta=1$, gives $k=1$, contrary to $k\ge3$.  Thus $n\ge2$. By Lemma~\ref{lem:frob}\textup{(i)},
\[
       r_{A_k+A_k}(c)\le n\le \frac{t+1}{p}.
\]
If $p\ge3$, then
\[
       r_{A_k+A_k}(c)\le\frac{t+1}{3}<\frac t2
\]
because $t>2$.  If $p=2$, then $n$ is odd, so
Lemma~\ref{lem:frob}\textup{(i)} gives
\[
       r_{A_k+A_k}(c)\le n-1\le \frac{t+1}{2}-1<\frac t2. \qedhere
\]
\end{proof}

We next relate $r_{A_k+A_k}(c)$ to point counts on Fermat curves.  For
$c\in\F_q^\times$, let $N(c)$ be the number of projective
$\F_q$-points on
\[
       X^k+Y^k=cZ^k,
\]
and let $N_0(c)$ count the points with $XYZ=0$.  The map
\[
       (X,Y,Z)\mapsto \left( \bigl(\frac{X}{Z}\bigr)^k,\bigl(\frac{Y}{Z}\bigr)^k \right)
\]
sends each projective point with $XYZ\ne0$ to a representation
$x+y=c$ with $x,y\in A_k$, and each such representation has exactly
$k^2$ preimages.  Hence
\begin{equation}
\label{eq:r-fermat}
       r_{A_k+A_k}(c)=\frac{N(c)-N_0(c)}{k^2}.
\end{equation}
For odd $p$, the boundary points are as follows: the line $Z=0$ carries
$k$ points precisely when $-1\in A_k$, equivalently when $t$ is even,
and each of the lines $X=0$ and $Y=0$ carries $k$ points precisely when
$c\in A_k$.

We use the following two estimates of Garcia and Voloch \cite{gv}, obtained
from the St\"ohr--Voloch theory of rational points on curves over finite
fields \cite{stohrvoloch}.

\begin{theorem}[{\cite[Theorems 1 and 2]{gv}}]
\label{gv1}
Suppose $p\ne2$, and let $c\in\F_q^\times$.  If
\[
       k\ne \frac{p^f-1}{p^d-1}
\]
for every proper divisor $d\mid f$, then the number $N(c)$ of
projective points on
\[
       X^k+Y^k=cZ^k
\]
satisfies
\begin{equation}
\label{eq:GV1point}
       N(c)\le \frac{k(q+k-1)-N_0(c)(k-2)}{2}.
\end{equation}
\end{theorem}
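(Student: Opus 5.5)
The plan is to apply the St\"ohr--Voloch bound for plane curves with respect to the linear system of lines to the smooth Fermat curve $\mathcal F_c: X^k+Y^k=cZ^k$. Here $p\nmid k$ because $k\mid q-1$. The curve has degree $d=k$ and genus $g=(k-1)(k-2)/2$, so $2g-2=k(k-3)$.

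First I would recall the St\"ohr--Voloch construction. For a plane curve that is \emph{Frobenius classical} with respect to lines (Frobenius orders $\nu_0=0$, $\nu_1=1$), one forms the divisor $S$ of the Wronskian-type function that vanishes at each point $P$ where $\Phi(P)$ lies on the tangent line at $P$, with $\Phi$ the $q$-Frobenius. Its degree is
\[
\deg S=\nu_1(2g-2)+(q+2)d=k(k-3)+(q+2)k=k(q+k-1).
\]
Every $\F_q$-rational point $P$ lies in the support of $S$ with $v_P(S)\ge j_2(P)$. Here $j_2(P)$ is the intersection multiplicity of $\mathcal F_c$ with its tangent line at $P$, and the general St\"ohr--Voloch inequality gives $v_P(S)\ge \sum_i (j_i(P)-\nu_{i-1})$ for the orders $j_i(P)$ at $P$. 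It follows that
\[
\sum_{P\in\mathcal F_c(\F_q)} j_2(P)\le k(q+k-1).
\]

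Next I would compute $j_2(P)$. At points with $XYZ=0$ (for example $(1:\zeta:0)$ with $\zeta^k=-1$, or $(x:0:1)$ with $x^k=c$), the tangent line is $Y-\zeta X=0$ (respectively $X-xZ=0$). In the local parameter this line meets the curve with multiplicity exactly $k$, because the equation there reads $u^k=\text{unit}\cdot(\text{linear form})$. So $j_2(P)=k$ at these $N_0(c)$ points, and $j_2(P)\ge 2$ at all other rational points. Substituting gives
\[
2(N(c)-N_0(c))+kN_0(c)\le k(q+k-1),
\]
which rearranges to $N(c)\le \bigl(k(q+k-1)-N_0(c)(k-2)\bigr)/2$, as claimed.

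The main obstacle is verifying the hypothesis that $\mathcal F_c$ is Frobenius classical with respect to lines. Nonclassicality means $\Phi(P)$ lies on the tangent line at a generic point $P$, i.e.
\[
x^{q}x^{k-1}+y^qy^{k-1}=c\quad\text{identically on the curve}.
\]
I would try to show that this forces $q-1+k$ to interact with $k$ so that $x^{q+k-1}$ is a $p$-power relation. This in turn forces $k=(p^f-1)/(p^d-1)$ for some proper $d\mid f$, which is the case excluded in the statement. For the generic-order computation I would first check classicality of the curve itself (orders $0,1,2$), using $p\ne2$ and $p\nmid k$. The exceptional values of $k$ are exactly those where $\mathcal F_c$ is a Hermitian-type curve over the subfield $\F_{p^d}$, and there the identity does hold. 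This matches Garcia--Voloch's characterization, which I would follow, computing the Wronskian determinant $\det\begin{pmatrix}x^q&y^q&1\\ x&y&1\\ 1&D^{(1)}y&0\end{pmatrix}$ modulo the curve equation.
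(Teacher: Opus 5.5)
Your overall route is the one the paper relies on. The paper cites Garcia--Voloch, whose Theorem~1 is the St\"ohr--Voloch bound for lines with $\deg S=k(q+k-1)$ and $v_P(S)\ge j_2(P)=k$ at the $N_0(c)$ boundary points; Remark~\ref{rem:gv-scope} invokes the same computation for $k=3$ with order sequence $(0,1,3)$. Their Theorem~2 characterizes Frobenius nonclassicality. Your computations of $\deg S$, of $v_P(S)\ge j_2(P)$, of $j_2(P)=k$ when $XYZ=0$, and the final rearrangement are all correct. However, your last paragraph contains one false claim, and it leaves the one nontrivial step unproved.

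\textbf{The false claim.} You assert that $p\ne2$ and $p\nmid k$ make $\mathcal F_c$ classical, with orders $(0,1,2)$. This fails for $k=p^h+1$. Over $\overline{\F}_p$ the curve is then projectively equivalent to the Hermitian curve $X^{p^h+1}+Y^{p^h+1}+Z^{p^h+1}=0$, whose generic tangent order is $\epsilon_2=p^h>2$. For instance, $p=3$, $k=4$, $q=81$ satisfies every hypothesis of the statement. So that check would fail, but you do not need it. The St\"ohr--Voloch bound uses only the Frobenius orders, and $\nu_1=1$ holds exactly when your determinant is nonzero, whatever $\epsilon_2$ is. Simply delete that step.

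\textbf{The Frobenius-classicality step.} This should be argued rather than gestured at, and it is short.
\begin{enumerate}
\item Your determinant equals $(x^{q+k-1}+y^{q+k-1}-c)/y^{k-1}$.
\item Put $u=x^k$, which is transcendental over $\F_q$. Then $y^k=c-u$ and $x^{q+k-1}=u^{t+1}$ with $t=(q-1)/k$. So nonclassicality means $u^{t+1}+(c-u)^{t+1}=c$ in $\F_q[u]$.
\item Setting $u=0$ gives $c^{t}=1$.
\item Substituting $u=cw$ then gives $(1-w)^{t+1}=1-w^{t+1}$. Hence $\binom{t+1}{j}\equiv0\pmod p$ for $0<j<t+1$, which by Lucas' theorem forces $t+1=p^d$ with $d\ge1$.
\item Then $p^d-1\mid p^f-1$ gives $d\mid f$. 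Also $k\ge2$ gives $d<f$; the statement tacitly assumes $k\ge2$, since for $k=1$ the curve is a line.
\end{enumerate}
Thus $k=(p^f-1)/(p^d-1)$ for a proper divisor $d$, which is exactly the excluded case.

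Conversely, the identity holds only when in addition $c\in\F_{p^d}$. So your remark that it ``does hold'' for the exceptional $k$ is true only for such $c$. Only the forward direction is needed, so this does not affect the argument.
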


\begin{theorem}[{\cite[Theorems 1 and 3]{gv}}]
\label{gvtheorem}
Suppose $p\ge7$, $k\ge3$, and $c\ne0$, and assume that it is not
simultaneously the case that $q=k+1$ and $c=2$.  Then the number $N(c)$
of projective points on
\[
       X^k+Y^k=cZ^k
\]
satisfies
\begin{equation}
\label{eq:GV2point}
       N(c)\le \frac25 k(q+2k-1)-\frac15 N_0(c)(2k-5)
\end{equation}
in each of the following cases:
\begin{enumerate}
\item[\textup{(i)}] $p\nmid (k-1)(k-2)(2k-1)$;

\item[\textup{(ii)}] $p\mid k-2$, unless
\[
       t=\frac{p^d-1}{2}
\]
for some $d\mid f$ with $c\in\F_{p^d}$;

\item[\textup{(iii)}] $p\mid 2k-1$, unless
\[
       t=2(p^d-1)
\]
for some $d\mid f$ with $f/d$ even and $c^2\in\F_{p^d}$.
\end{enumerate}
\end{theorem}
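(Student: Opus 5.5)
Theorem~\ref{gvtheorem} is quoted from \cite{gv}, so in the paper it is simply invoked. If I were to reprove it, I would apply St\"ohr--Voloch theory \cite{stohrvoloch} to the Fermat curve $\mathcal F_c:X^k+Y^k=cZ^k$, using the complete linear series of conics rather than lines (lines are what give Theorem~\ref{gv1}).

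Concretely, I would take the morphism $\phi:\mathcal F_c\to\mathbb P^5$ given by $(X^2,XY,Y^2,XZ,YZ,Z^2)$. It has degree $2k$, and the genus satisfies $2g-2=k(k-3)$. The refined St\"ohr--Voloch bound says that if the $\F_q$-Frobenius orders are $\nu_0=0<\nu_1<\cdots<\nu_4$, then
\[
5N\le(\nu_1+\cdots+\nu_4)(2g-2)+(q+5)\cdot2k-\sum_{P}\bigl(A(P)-(\nu_1+\cdots+\nu_4)\bigr),
\]
where $A(P)$ is a weight at each rational point $P$, determined by the $\phi$-order sequence at $P$. The plan has three steps.

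\textbf{Step 1: the order sequence is classical.} I would show that the generic order sequence of $\phi$ is $0,1,2,3,4,5$. For the Fermat curve this amounts to checking a Wronskian in a local parameter. The only possible obstructions are binomial coefficients that vanish mod $p$, and they involve $k-1$, $k-2$ and $2k-1$. This is where the hypotheses $p\ge7$ and the divisibility conditions enter.

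\textbf{Step 2: the Frobenius orders are $0,1,2,3,4$.} The only alternative is that the Frobenius image $(x^q,y^q)$ of a generic point lies on the osculating conic at $(x,y)$. Writing the osculating conic explicitly from derivatives of $y^k=c-x^k$, this becomes a polynomial identity in $x$. In case (i) I expect it to fail outright. In cases (ii) and (iii), where $p\mid k-2$ or $p\mid 2k-1$, some derivative terms degenerate. Then the identity can hold, and I would solve for exactly when it does. The solutions should be the listed exceptions: $t=(p^d-1)/2$ with $c\in\F_{p^d}$, or $t=2(p^d-1)$ with $c^2\in\F_{p^d}$. The separate exclusion $q=k+1$, $c=2$ is the degenerate case where the curve is essentially $x+y=2$ over a tiny field.

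\textbf{Step 3: weights at the points with $XYZ=0$.} At such a point (say $Z=0$), one coordinate has a zero of order $k$, so the $\phi$-order sequence is of the form $0,1,2,k,k+1,2k$ rather than classical. The excess weight is then $(k+k+1+2k)-(3+4+5)=4k-11$, possibly minus a boundary adjustment. I would pair this with the at-least-one-extra contribution coming from $\F_q$-rationality, to get a saving of $2k-5$ per boundary point, uniformly after the bookkeeping. Substituting $\sum\nu_i=10$, $2g-2=k(k-3)$ and this saving gives
\[
N\le\tfrac25k(q+2k-1)-\tfrac15N_0(c)(2k-5).
\]

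The main obstacle is Step~2: the Frobenius non-classicality analysis, and specifically pinning down the exceptional cases (ii) and (iii) exactly. Getting the exact per-point saving in Step~3 is the other delicate point, since the stated $(2k-5)$ has to come out precisely.
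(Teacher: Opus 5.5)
\textbf{There is a genuine gap.} Your route, St\"ohr--Voloch for the morphism by conics, is the right one. The boundary order sequence $(0,1,2,k,k+1,2k)$ and the excess $4k-11$ at a rational boundary point are also correct; this is what lies behind the cited result and Remark~\ref{rem:gv-scope}. But Step~3 does not give the stated inequality.

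With $\nu=(0,1,2,3,4)$, degree $2k$ and $2g-2=k(k-3)$, you get $\deg S=10k(k-3)+2k(q+5)=2k(q+5k-10)$. Crediting only $\F_q$-rational points then gives $5N\le 2k(q+5k-10)-(4k-11)N_0$. This exceeds the target $2k(q+2k-1)-(2k-5)N_0$ by $(2k-6)(3k-N_0)$, which is positive whenever $k\ge4$ and $N_0<3k$. For example, with $N_0=0$ it only yields $r_{A_k+A_k}(c)\le\frac25(t+5-9/k)$, which is insufficient for Corollary~\ref{cor:gv}(b) when $t$ is small.

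The missing idea is that $v_P(S)\ge\sum_{i=0}^{4}(j_i(P)-\nu_i)$ holds at \emph{every} point, rational or not. All $3k$ geometric points with $XYZ=0$ are total flexes with conic orders $(0,1,2,k,k+1,2k)$, so each contributes at least $2k-6$. Removing $3k(2k-6)$ from $\deg S$ gives exactly $2k(q+2k-1)$. At a rational boundary point, $v_P(S)\ge 4k-6=(2k-6)+2k$, which is a saving of $2k-5$ over the generic $5$. That is where $2k-5$ comes from.

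Steps~1--2 are also mis-framed. For $p\ge7$, the conic Wronskian of $x^k+y^k=1$ vanishes identically exactly when $p\mid(k-1)(k+1)(k-2)(2k-1)$. You can see this from Monge's expression $9(y'')^2y^{(5)}-45y''y'''y^{(4)}+40(y''')^3$.
\begin{itemize}
\item The order sequence is therefore \emph{non-classical} throughout cases (ii) and (iii). It is also non-classical in case (i) when $p\mid k+1$: for $k=p^h-1$, the conic $x_0^{p^h}y+y_0^{p^h}x-xy=0$ has contact $\ge p^h$ at $(x_0,y_0)$.
\item Conversely, suppose the orders were $0,\dots,5$ with $p\ge7$. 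Then the $p$-adic criterion for Frobenius orders (every $\binom{5}{\mu}\not\equiv0\pmod p$) would force $\nu=(0,1,2,3,4)$ at once. So your Step~2 could never produce the exceptions in (ii) and (iii).
\end{itemize}
The real work, which your sketch leaves as an expectation, is to show that when $\epsilon_5>5$ the Frobenius orders still omit $\epsilon_5$ outside the listed cases.

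Nor does this ``fail outright'' in case (i). Take $q=k+1$, so $p\mid k+1$ and case (i) holds, and take $c=2$. The osculating conic of $x^{q-1}+y^{q-1}=2$ at $(x_0,y_0)$ is $x_0^qy+y_0^qx-2xy=0$, and it contains $(x_0^q,y_0^q)$. This is a genuine Frobenius non-classical case, not a small-field degeneracy, and that is why the pair $q=k+1$, $c=2$ is excluded.
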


\begin{remark}
\label{rem:gv-scope}
We note that \eqref{eq:GV1point} is originally stated in \cite[Theorem~1]{gv}
for $k\geq 4$, and \eqref{eq:GV2point} stated for
$k\geq 5$.  The cases of $k=3$ and $k=3,4$, respectively,
follow from the same argument applied to the general St\"ohr--Voloch
inequality \cite[p.~346, (*)]{gv}. The corresponding boundary order
sequences are
$(0,1,3),\ (0,1,2,3,4,6),\ (0,1,2,4,5,8)$,
and substitution gives exactly
\eqref{eq:GV1point} and \eqref{eq:GV2point}.
The additional hypothesis excluding $q=k+1$, $c=2$ is due to
Arakelian and Borges \cite[Theorem~2.8(iv) and
Remark~2.9]{arakelianborges}, who identify the further exceptional
case $q=k+1$, $a+b=1$ omitted in \cite[Theorem~3]{gv}. It is vacuous
here, since $q=kt+1$ and $t\ge5$.
\end{remark}

\begin{corollary}
\label{cor:gv}
Let $p$ be odd and let $c\in\F_q^\times$.
\begin{enumerate}
\item[\textup{(a)}] One has
\begin{equation}
\label{eq:GV1}
       r_{A_k+A_k}(c)\le \frac t2+\frac{k-N_0(c)}{2k}.
\end{equation}
In particular, if $c\in A_k$, then $r_{A_k+A_k}(c)<t/2$.

\item[\textup{(b)}] If $p\ge7$ and \eqref{eq:GV2point} holds for $c$,
then $r_{A_k+A_k}(c)<t/2$.
\end{enumerate}
\end{corollary}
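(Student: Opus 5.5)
We prove both parts by substituting the Garcia--Voloch bounds into the identity \eqref{eq:r-fermat}, $k^2 r_{A_k+A_k}(c)=N(c)-N_0(c)$, and simplifying with $q=kt+1$.

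For (a), we first check that Theorem~\ref{gv1} applies. Suppose instead that $k=(p^f-1)/(p^d-1)$ for some proper divisor $d\mid f$. Then $t=(q-1)/k=p^d-1$, so $A_k=\F_{p^d}^\times$ and $\F_p(A_k)=\F_{p^d}\ne\F_q$. This contradicts the minimal-field convention, so \eqref{eq:GV1point} holds. Substituting,
\[
k^2 r_{A_k+A_k}(c)\le \frac{k(q+k-1)-N_0(c)(k-2)}{2}-N_0(c)=\frac{k(q+k-1)-kN_0(c)}{2}.
\]
Since $q+k-1=kt+k$, dividing by $k^2$ gives $r_{A_k+A_k}(c)\le \frac{t}{2}+\frac{1}{2}-\frac{N_0(c)}{2k}$, which is \eqref{eq:GV1}.

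If $c\in A_k$, the boundary description preceding Theorem~\ref{gv1} shows that the lines $X=0$ and $Y=0$ each carry $k$ points. These two sets are disjoint: a point on both lines would be $(0,0,1)$, which does not lie on the curve since $c\ne0$. Hence $N_0(c)\ge 2k$, and the right-hand side of \eqref{eq:GV1} is at most $t/2-1/2<t/2$.

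For (b), we use \eqref{eq:GV2point} in the same way:
\[
k^2 r_{A_k+A_k}(c)\le \frac25k(q+2k-1)-\frac{2k-5}{5}N_0(c)-N_0(c)=\frac25 k(kt+2k)-\frac{2k}{5}N_0(c).
\]
Dividing by $k^2$ gives $r_{A_k+A_k}(c)\le \frac{2t}{5}+\frac45-\frac{2N_0(c)}{5k}\le \frac{2t+4}{5}$. The inequality $\frac{2t+4}{5}<\frac t2$ is equivalent to $t>8$.

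The main obstacle is therefore the small cases $5\le t\le 8$ with $N_0(c)=0$, where this bound falls short. To handle them, we note that $r_{A_k+A_k}(c)$ is an integer. The bound gives at most $\lfloor(2t+4)/5\rfloor$, which equals $2,3,3,4$ for $t=5,6,7,8$, and this is below $t/2$ except when $t=6$ or $t=8$. For even $t$ we have $-1\in A_k$, so the line $Z=0$ contributes $k$ points and $N_0(c)\ge k$. This improves the bound to $2t/5+2/5$, which is less than $t/2$ for every $t>4$. With these refinements, all cases $t\ge5$ are covered.
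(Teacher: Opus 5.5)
Your proof is correct and is essentially the paper's argument: the same minimality check for Theorem~\ref{gv1}, the same substitution into \eqref{eq:r-fermat} using $q=kt+1$, and the same bound $N_0(c)\ge 2k$ when $c\in A_k$. In (b) you use the paper's two ingredients, the integrality of $r_{A_k+A_k}(c)$ and the $k$ points on $Z=0$ when $t$ is even; you organize them as the case $t>8$ plus the cases $5\le t\le 8$, whereas the paper splits by the parity of $t$.
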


\begin{proof}
For \textup{(a)}, the hypothesis of Theorem~\ref{gv1} holds automatically.
Indeed, if
\[
       k=\frac{p^f-1}{p^d-1}
\]
for some proper divisor $d\mid f$, then
\[
       t=\frac{p^f-1}{k}=p^d-1,
\]
so $\ord_t(p)\le d<f$, contrary to $f=\ord_t(p)$.

Combining \eqref{eq:GV1point} with \eqref{eq:r-fermat}, we obtain
\[
       r_{A_k+A_k}(c)
       \le
       \frac{k(q+k-1)-N_0(c)(k-2)-2N_0(c)}{2k^2}
       =
       \frac{k(q+k-1)-kN_0(c)}{2k^2}.
\]
Since $q=kt+1$, this becomes
\[
       r_{A_k+A_k}(c)\le \frac t2+\frac{k-N_0(c)}{2k}.
\]
If $c\in A_k$, then the boundary points with $X=0$ or $Y=0$ contribute $2k$ points to $N_0(c)$, and hence
\[
       r_{A_k+A_k}(c)\le \frac t2-\frac12<\frac t2.
\]

For \textup{(b)}, combining \eqref{eq:GV2point} with
\eqref{eq:r-fermat} gives
\[
\begin{aligned}
       r_{A_k+A_k}(c)
       &\le
       \frac{\frac25 k(q+2k-1)-\frac15 N_0(c)(2k-5)-N_0(c)}{k^2}  \\
       &=\frac25\left(t+2-\frac{N_0(c)}k\right).
\end{aligned}
\]
If $t$ is even, then $-1\in A_k$, so the points at infinity contribute
at least $k$ points to $N_0(c)$.  Therefore
\[
       r_{A_k+A_k}(c)\le \frac25(t+1)<\frac t2,
\]
since $t\ge5$.  If $t$ is odd, then
\[
       r_{A_k+A_k}(c)\le \frac25(t+2)<\frac{t+1}{2}.
\]
As $r_{A_k+A_k}(c)$ is an integer and $t$ is odd, this implies
\[
       r_{A_k+A_k}(c)\le \frac{t-1}{2}<\frac t2. \qedhere
\]
\end{proof}

We treat $p=2$, $p\ge7$, and $p\in\{3,5\}$ separately.  For $p\ge7$ we
apply the Garcia--Voloch estimate and handle its exceptional cases directly.
For $p=3$ and $p=5$, the result follows from elementary congruences and
Lemma~\ref{lem:frob}.

\begin{proof}[Proof of Theorem~\ref{thm:half-rep}]
Fix $c\in\F_q^\times$.

First suppose $p=2$.  Since $t\mid 2^f-1$, the integer $t$ is odd.
Thus $2\mid t+1$, and Corollary~\ref{cor:tplus1} gives
$r_{A_k+A_k}(c)<t/2$.

Next suppose $p\ge7$.  If $p\mid k-1$, then reducing  $kt=q-1$ modulo $p$ gives
\[
       t\equiv -1\pmod p.
\]
Thus $p\mid t+1$, and Corollary~\ref{cor:tplus1} applies.  Now we may assume $p\nmid k-1$, and so at least one of the three cases of Theorem~\ref{gvtheorem} must occur. We first note that unless one  of the exceptional branches \textup{(ii)} or \textup{(iii)} occurs,  Theorem~\ref{gvtheorem} gives \eqref{eq:GV2point}, and therefore
$r_{A_k+A_k}(c)<t/2$ by Corollary~\ref{cor:gv}\textup{(b)}.

Next, suppose the exceptional branch in \textup{(ii)} occurs.  Then
\[
       t=\frac{p^d-1}{2}
\]
for some $d\mid f$.  Thus $2t+1=p^d$. Here,  Lemma~\ref{lem:minpower} with
$\delta=2$ gives $k=2$, contrary to $k\ge3$, and we conclude this branch cannot
occur.

Finally, suppose the exceptional branch in \textup{(iii)} occurs.  Then
$t=2(p^d-1)$ for some $d\mid f$.
Put $K=\F_{p^d}$.  Then $|K^\times|=t/2$.  Since $\F_q^\times$ is cyclic,
$K^\times$ is the unique subgroup of $\F_q^\times$ of order $t/2$, and
therefore $K^\times\le A_k$.  Choosing $\alpha\in A_k\setminus K^\times$, we have
\[
       A_k=K^\times\cup \alpha K^\times,
       \qquad \alpha^2\in K^\times,
       \qquad \alpha\notin K.
\]
Since $p$ is odd, $[K(\alpha):K]=2$.  Moreover, $K(\alpha)$ contains
$A_k$, which generates $\F_q$, and hence
\[
       \F_q=K(\alpha)=K\oplus \alpha K.
\]
It follows that for every $c = x+ y  \in A_k + A_k$,
\[
r_{A_k+A_k}(x+y)=
\begin{cases}
       |K| -2,&x, y \in K^\times \\
       |K| -2,&x, y \in \alpha K^\times  \\
       2,&x \in K^\times , y \in \alpha K^\times.
\end{cases}
\]
Thus,
\[
       r_{A_k+A_k}(c)\le \max(|K|-2,2).
\]
Since $t=2(|K|-1)\ge5$, we have $t\ge6$, and so
\[
       \max(|K|-2,2)=|K|-2=\frac t2-1<\frac t2.
\]
This proves the result for $p\ge7$.

It remains to consider $p\in\{3,5\}$.  Since $t\mid p^f-1$, we have
$p\nmid t$.  If $p\mid t+1$, then Corollary~\ref{cor:tplus1} applies,
so assume $p\nmid t+1$.

Let
\[
       P=
       \begin{cases}
       9,&p=3,\\
       5,&p=5.
       \end{cases}
\]
For $p=3$, the remaining possibilities are
\[
       t\equiv 1,4,7\pmod 9,
\]
and these give respectively
\[
       9\mid t-1,\qquad 9\mid 2t+1,\qquad 9\mid t+2.
\]
For $p=5$, the remaining possibilities are
\[
       t\equiv 1,2,3\pmod 5,
\]
and these give respectively
\[
       5\mid t-1,\qquad 5\mid 2t+1,\qquad 5\mid t+2.
\]
Thus, in all cases, exactly one of
\[
       P\mid t-1,\qquad P\mid 2t+1,\qquad P\mid t+2
\]
holds.  Write the corresponding divisible quantity as $np^e$, with
$p\nmid n$.  Then $p^e\ge P\ge5$.

If
\[
       t-1=np^e,
\]
then $np^e\equiv -1\pmod t$, and Lemma~\ref{lem:frob}\textup{(ii)}
gives
\[
       r_{A_k+A_k}(c)\le 2n\le \frac{2(t-1)}{5}<\frac t2.
\]
If
\[
       2t+1=np^e,
\]
then $np^e\equiv 1\pmod t$.  If $n=1$, then
Lemma~\ref{lem:minpower}, with $\delta=2$, gives $k=2$, contrary to
$k\ge3$.  Hence $n\ge2$, and Lemma~\ref{lem:frob}\textup{(i)} gives
\[
       r_{A_k+A_k}(c)\le n\le \frac{2t+1}{5}<\frac t2.
\]
Finally suppose
\[
       t+2=np^e.
\]
If $c\in A_k$, then Corollary~\ref{cor:gv}\textup{(a)} gives
$r_{A_k+A_k}(c)<t/2$.  If $c\notin A_k$, then
Lemma~\ref{lem:frob}\textup{(iii)} gives
\[
       r_{A_k+A_k}(c)\le 2n\le \frac{2(t+2)}{p^e}.
\]
When $p^e\ge9$, this is already $<t/2$.  When $p^e=5$, it is $<t/2$
for $t>8$.  The only remaining value with $t\ge5$ and $5\mid t+2$ is
$t=8$.  In that case $n=2$ is even, so Lemma~\ref{lem:frob}\textup{(iii)}
gives
\[
       r_{A_k+A_k}(c)\le 2n-2=2<4=\frac t2.
\]
This exhausts all cases and proves the theorem.
\end{proof}

\section{Proof of Corollary~\ref{cor:diff-bound} and Theorem~\ref{thm:no-two-point}}
\label{sec:no-two-point}

For $k \geq 3$, we rule out two-point summands in a hypothetical nontrivial decomposition $A_k = S+T$ using the strict difference bound in Corollary~\ref{cor:diff-bound}, which we prove first. The case $k=2$ requires
a separate argument, as $r_{A_k-A_k}(d)$ can attain the value $|A_k|/2$. Throughout, let $R=A_2\le\F_q^\times$ denote the subgroup of quadratic residues in odd
characteristic, and let $\eta$ be the quadratic character.  The following
identity is well known, but we include a short proof for convenience.

\begin{lemma}
\label{lem:quadratic-difference}
For $d\ne0$,
\[
       r_{R-R}(d)=\frac{q-3-\eta(d)-\eta(-d)}4.
\]
Writing $|R|=(q-1)/2$, one has $r_{R-R}(d)<|R|/2$ except when
$q\equiv1\pmod4$ and $d\notin R$, in which case $r_{R-R}(d)=|R|/2$.
\end{lemma}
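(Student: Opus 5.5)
The plan is to compute $r_{R-R}(d)$ with character sums. For $x\in\F_q^\times$, the indicator of $R$ is $\tfrac12(1+\eta(x))$. Hence
\[
r_{R-R}(d)=\sum_{\substack{y\in\F_q\\ y\ne0,\ y\ne -d}}\frac{(1+\eta(y+d))(1+\eta(y))}{4},
\]
where $y$ runs over field elements with both $y$ and $y+d$ nonzero, so that $x=y+d$. Expanding the product gives four sums over the $q-2$ admissible values of $y$, and I will evaluate each in turn.

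\begin{enumerate}
\item The constant term contributes $q-2$.
\item For the sum of $\eta(y)$ over $y\ne0,-d$: the full sum over $\F_q^\times$ is $0$, so removing the term $y=-d$ gives $-\eta(-d)$.
\item For the sum of $\eta(y+d)$ over the same range, substitute $u=y+d$. Then $u$ runs over $\F_q^\times$ minus $u=d$, so the sum is $-\eta(d)$.
\item The cross term $\sum_y\eta(y(y+d))$ is the standard Jacobsthal-type sum. For $y\ne0$ write $\eta(y(y+d))=\eta(1+d/y)$. As $y$ ranges over $\F_q^\times$, $1+d/y$ ranges over $\F_q\setminus\{1\}$, so the sum is $0-\eta(1)=-1$. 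The term $y=-d$ contributes $\eta(0)=0$, so excluding it changes nothing.
\end{enumerate}

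Adding the four contributions,
\[
r_{R-R}(d)=\frac{(q-2)-\eta(d)-\eta(-d)-1}{4}=\frac{q-3-\eta(d)-\eta(-d)}{4},
\]
which is the stated identity.

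For the comparison with $|R|/2=(q-1)/4$, note that $r_{R-R}(d)<|R|/2$ is equivalent to $\eta(d)+\eta(-d)>-2$. The only way this fails is $\eta(d)=\eta(-d)=-1$. If $q\equiv3\pmod4$, then $\eta(-1)=-1$, so $\eta(-d)=-\eta(d)$ and the sum is $0$; the strict inequality holds. If $q\equiv1\pmod 4$, then $\eta(-1)=1$, so the sum is $2\eta(d)$. This equals $-2$ exactly when $d\notin R$, and in that case $r_{R-R}(d)=(q-1)/4=|R|/2$.

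There is no real obstacle here. The only care needed is in the bookkeeping of the excluded points $y=0$ and $y=-d$ in the character sums, and in the convention $\eta(0)=0$.
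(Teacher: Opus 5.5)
Your proposal is correct and follows the same route as the paper: you expand $\tfrac14(1+\eta(y))(1+\eta(y+d))$ over $y\ne0,-d$, correct for the two excluded points, and use the Jacobsthal-type evaluation $\sum_y\eta(y)\eta(y+d)=-1$. The only additions are that you prove that evaluation via $y\mapsto 1+d/y$ and write out the comparison with $|R|/2$, both of which the paper leaves to the reader.
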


\begin{proof}
For $x\ne0$ one has $1_R(x)=(1+\eta(x))/2$.  Expanding
\[
       r_{R-R}(d)=\sum_x1_R(x)1_R(x+d)
\]
and correcting for $x=0,-d$ gives the formula, using
\[
       \sum_x\eta(x)\eta(x+d)=-1
       \qquad(d\ne0). \qedhere
\]
\end{proof}
\begin{proof}[Proof of Corollary~\ref{cor:diff-bound}]
Fix $d\ne0$.  If $-1\in A_k$, then $-A_k=A_k$, so
$r_{A_k-A_k}(d)=r_{A_k+A_k}(d)$, and Theorem~\ref{thm:half-rep} applies.

Suppose $-1\notin A_k$.  Then $q$ and $t$ are odd and $k$ is even; in
particular, $k\ge4$.  Put $B=A_k\sqcup(-A_k)$.  This is the subgroup of
order $2t$ and index $k/2\ge2$, and it still generates $\F_q$.
Partitioning representations according to the two cosets gives
\[
       r_{B+B}(d)=r_{A_k+A_k}(d)+r_{A_k+A_k}(-d)
       +2r_{A_k-A_k}(d).
\]
If $B$ has index at least $3$, then Theorem~\ref{thm:half-rep} gives
$r_{B+B}(d)<|B|/2=t$, and hence $r_{A_k-A_k}(d)<t/2$.  If $B$ has index
$2$, then it is the quadratic-residue subgroup and $B=-B$.  Thus by
Lemma~\ref{lem:quadratic-difference},
\[
       r_{B+B}(d)=r_{B-B}(d)\le \frac{|B|}2=t.
\]
It follows that $2r_{A_k-A_k}(d)\le t$, and because $t$ is odd,
$r_{A_k-A_k}(d)<t/2$.
\end{proof}
\begin{lemma}
\label{lem:nonsquare-progression}
Suppose $q\equiv1\pmod4$ and $q\ge13$.  If $d\notin R$, then $R$ contains a
three-term arithmetic progression with common difference $d$.
\end{lemma}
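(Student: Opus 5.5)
We need to show that there is $x$ with $x,\ x+d,\ x+2d\in R$. We count such $x$ with a character sum and show the count is positive. For $x\notin\{0,-d,-2d\}$ we have $1_R(y)=(1+\eta(y))/2$. Hence the number $N$ of good $x$ satisfies
\[
8N=\sum_{x\notin\{0,-d,-2d\}}\bigl(1+\eta(x)\bigr)\bigl(1+\eta(x+d)\bigr)\bigl(1+\eta(x+2d)\bigr).
\]
We expand the product into eight terms and evaluate them one at a time.

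\emph{Constant term.} It contributes $q-3$.

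\emph{Single characters.} Each $\sum_x\eta(x+jd)$ over all of $\F_q$ is $0$. Removing the three excluded points changes each such sum by a bounded amount: at most $2$ in absolute value, since one of the excluded values makes the argument zero.

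\emph{Pair terms.} Each $\sum_x\eta(x+id)\eta(x+jd)$ with $i\ne j$ equals $-1$ over all $x$, since it is a complete sum with distinct roots. The excluded points again change it by at most $2$.

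\emph{Triple term.} $\sum_x\eta(x(x+d)(x+2d))$ is a character sum of a cubic with distinct roots; here $p$ is odd and $2d\ne0$. By Weil's bound (the Hasse bound for the elliptic curve $y^2=x(x+d)(x+2d)$) its absolute value is at most $2\sqrt q$.

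Combining these gives
\[
8N\ge q-3-2\sqrt q-C
\]
for an explicit small constant $C$, roughly $3\cdot2+3\cdot3=15$. This is positive once $q$ exceeds some modest bound, around $q\ge 37$ or so.

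So far the hypothesis $d\notin R$ has not been used. It matters for tight cases and for the error terms, which can be computed exactly. Since $q\equiv1\pmod 4$, we have $\eta(-1)=1$, and the excluded values are evaluated using $\eta(d)=-1$, $\eta(2)$, and so on. With these, the boundary corrections can be written out precisely rather than bounded crudely, which improves $C$. Alternatively, one can use Lemma~\ref{lem:quadratic-difference}: for $d\notin R$ and $q\equiv1\pmod4$, $r_{R-R}(d)=|R|/2=(q-1)/4$. So $R\cap(R-d)$ is already large, and we need one of its elements $x$ to also satisfy $x+2d\in R$.

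\emph{Main obstacle: small $q$.} The Weil argument leaves finitely many prime powers $q\equiv1\pmod4$ with $13\le q<q_0$, namely $13,17,25,29,37,\dots$. For these I would first sharpen the error bookkeeping exactly using $\eta(d)=-1$; hopefully this pushes the threshold down to about $q\ge 13$, because $q-3-2\sqrt q$ is about $5.8$ at $q=13$. If it does not, I would check the remaining cases directly. Multiplying by elements of $R$ permutes both $R$ and the nonresidues, so it suffices to treat a single nonresidue $d$ for each $q$. For example, for $q=13$ take $R=\{1,3,4,9,10,12\}$ and $d=2$; then $10,12,1$ works. The hypothesis $q\ge13$ presumably excludes $q=5$ and $q=9$, where the claim fails.
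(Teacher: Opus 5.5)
Your approach is the same as the paper's. You expand the product of factors $(1\pm\eta)$, reduce everything except the cubic character sum to explicit constants, and apply the Weil bound. The only difference is cosmetic: the paper substitutes $y=dz$, so the condition becomes that $z-1,z,z+1$ are all nonsquares.

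As written, however, your argument does not yet prove the lemma. With your crude constant $C=15$ you get $8N\ge q-18-2\sqrt q$, which is positive only for $q\ge 29$. Below that you verify only $q=13$, so the cases $q=17$ and $q=25$ are left open. By your own estimated threshold, even more cases would remain.

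The missing step is the one you point to but do not carry out: compute the boundary terms exactly. Use $\eta(-1)=1$ and $\eta(d)=-1$.
\begin{itemize}
\item At the excluded points $x\in\{0,-d,-2d\}$, the three single characters have total value $-(4+2\eta(2))$. So over the remaining $x$ they contribute $+(4+2\eta(2))$.
\item The pair terms have total value $1+2\eta(2)$ at the excluded points. So they contribute $-3-(1+2\eta(2))=-(4+2\eta(2))$.
\item The cubic term vanishes at the excluded points.
\end{itemize}
Everything except the cubic sum therefore cancels, and
\[
8N=q-3+\sum_{x\in\F_q}\eta\bigl(x(x+d)(x+2d)\bigr)\ge q-3-2\sqrt q=(\sqrt q-3)(\sqrt q+1).
\]
This is positive for every $q>9$, so no small cases remain. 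At $q=9$ the bound degenerates to $8N\ge 0$, and there the claim does fail.

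This exact cancellation is precisely where $d\notin R$ enters. The single-character boundary values are $\eta(\pm d)$ and $\eta(\pm 2d)$, and they change sign with $\eta(d)$. The pair values involve only $d^2$, so they do not. For $d\in R$ the two contributions would add to $-(8+4\eta(2))$ instead of cancelling.
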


\begin{proof}
Write the middle term as $y=dz$.  Since $d$ is a nonsquare, the conditions $y-d,\ y,\ y+d\in R$ are equivalent to $z-1,z,z+1$
all being nonzero nonsquares.  For $w\ne0$ the nonsquare indicator is
$(1-\eta(w))/2$, so the number $M$ of admissible $z$ is
\[
      M = \frac18\sum_{z\in E}
       \bigl(1-\eta(z-1)\bigr)\bigl(1-\eta(z)\bigr)\bigl(1-\eta(z+1)\bigr),
\]
where $E=\F_q\setminus\{0,\pm1\}$.  Expanding gives
\[
8M=\sum_{z\in E}\{1-A(z)+B(z)-C(z)\},
\]
where
\[
\begin{aligned}
A(z)&=\eta(z-1)+\eta(z)+\eta(z+1),\\
B(z)&=\eta(z(z-1))+\eta((z-1)(z+1))+\eta(z(z+1)),\\
C(z)&=\eta(z(z-1)(z+1)).
\end{aligned}
\]
Note that 
\[
\sum_{z \in \F_q} A(z) = 0 ,\quad \sum_{z \in F_q} B(z) = 3,
\]
and since $\eta(-1)=1$, we get
\[
\sum_{z\in E} A(z)=-(4+2\eta(2)),
\qquad
\sum_{z\in E} B(z)=-(4+2\eta(2)).
\]
Thus the $A$ and $B$ terms cancel, and since $C(z)=0$ for $z \in \{0, \pm 1\}$, 
\[
8M=q-3-\sum_{z\in\F_q}\eta(z(z-1)(z+1)).
\]
The cubic $z(z-1)(z+1)$ is squarefree, so the Weil bound (see, e.g., \cite{ln}) gives
\[
|\sum_{z\in \F_q} \eta(z(z-1)(z+1))|\le2\sqrt q,
\]
and $M>0$ for $q\ge13$.
\end{proof}

\begin{lemma}
\label{lem:quadratic-no-two-point}
Let $q>9$ be odd.  Then $R=A_2\le\F_q^\times$ has no two-point additive
summand.
\end{lemma}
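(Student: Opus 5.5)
We argue by contradiction, following the proof of Proposition~\ref{prop:edge-obstruction} and using Lemma~\ref{lem:nonsquare-progression} to rule out the one case where the difference bound is not strict. Suppose $R=\{a,a+d\}+T$ with $d\ne0$. As in that proof, put $T'=T+a$, so that
\[
R=\{0,d\}+T'=T'\cup(T'+d),\qquad T'\subseteq R\cap(R-d),
\]
and hence
\[
\frac{|R|}{2}\le|T'|\le r_{R-R}(d).
\]
By Lemma~\ref{lem:quadratic-difference}, $r_{R-R}(d)<|R|/2$ unless $q\equiv1\pmod4$ and $d\notin R$. So every other case is already a contradiction.

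It remains to treat $q\equiv1\pmod4$ and $d\notin R$. There Lemma~\ref{lem:quadratic-difference} gives $r_{R-R}(d)=|R|/2$, so every inequality in the chain above is an equality. This yields two rigidity facts.

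First, $T'=R\cap(R-d)$: the inclusion $T'\subseteq R\cap(R-d)$ holds and the two sets have the same size.

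Second, the union $R=T'\cup(T'+d)$ is disjoint, because $|R|=2|T'|$.

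Since $q$ is odd, $q\equiv1\pmod 4$ and $q>9$, we have $q\ge13$. Lemma~\ref{lem:nonsquare-progression} therefore supplies $y$ with $y-d,\,y,\,y+d\in R$.

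From $y-d\in R$ and $y\in R$ we get $y-d\in R\cap(R-d)=T'$, so $y\in T'+d$.

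From $y\in R$ and $y+d\in R$ we get $y\in R\cap(R-d)=T'$.

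Thus $y\in T'\cap(T'+d)$, which contradicts the disjointness. Hence $R$ has no two-point additive summand.

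The substantive input is the existence of the three-term progression with nonsquare difference, which Lemma~\ref{lem:nonsquare-progression} provides. The hypothesis $q>9$ is exactly what is needed to reach its range $q\ge13$ in the case $q\equiv1\pmod4$. The rest is bookkeeping: equality in the counting bound forces the rigid structure $T'=R\cap(R-d)$ together with a disjoint union, and the progression then breaks that structure.
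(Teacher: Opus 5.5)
Your proposal is correct and follows the paper's proof essentially step for step: the chain $|R|/2\le|T'|\le r_{R-R}(d)$, the equality case of Lemma~\ref{lem:quadratic-difference} forcing $T'=R\cap(R-d)$ and $R=T'\sqcup(T'+d)$, and then a three-term progression with difference $d$ from Lemma~\ref{lem:nonsquare-progression} that contradicts this disjointness. You also state explicitly that $q$ odd, $q>9$, and $q\equiv1\pmod4$ force $q\ge13$, which the paper leaves implicit.
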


\begin{proof}
Suppose, after translating the summand, that $R=\{0,d\}+X$, $d \neq 0$. Then
\[
       X\subseteq R\cap(R-d),
       \qquad |R|\le2|X|.
\]
The inclusions above imply $r_{R-R}(d)\ge |R|/2$.  Hence
Lemma~\ref{lem:quadratic-difference} gives
$q\equiv1\pmod4$, $d\notin R$, and $r_{R-R}(d)=|R|/2$.  Consequently,
\[
       X=R\cap(R-d),
       \qquad |X|=\frac{|R|}{2},
\]
and therefore
\[
       R=X\sqcup(X+d).
\]
This partition precludes a three-term arithmetic progression in $R$ with
common difference $d$.  Indeed, if $x,x+d,x+2d\in R$, then $x,x+d\in X$,
while $x+d\in X+d$, contradicting the disjointness.
Lemma~\ref{lem:nonsquare-progression} now gives the required contradiction.
\end{proof}
\begin{proof}[Proof of Theorem~\ref{thm:no-two-point}]
Let $A_k=S+T$ be a nontrivial decomposition.

Suppose first that $k=2$, i.e., $A_k=R$.  Since $t\ge5$, one has $q>9$,
and Lemma~\ref{lem:quadratic-no-two-point} shows that neither summand can
have cardinality two.  Moreover, for distinct $a,b\in S$,
\[
       T\subseteq(R-a)\cap(R-b),
\]
whose size is $r_{R-R}(a-b)\le t/2$ by
Lemma~\ref{lem:quadratic-difference}.  Interchanging $S$ and $T$ gives
the same bound for $S$.  Hence $3\le|S|,|T|\le t/2$.

Assume now that $k\ge3$.  For distinct $a,b\in S$,
\[
       T\subseteq(A_k-a)\cap(A_k-b),
\]
whose size is $r_{A_k-A_k}(a-b)<t/2$ by
Corollary~\ref{cor:diff-bound}; by symmetry, $|S|<t/2$ as well.
Finally, Corollary~\ref{cor:diff-bound} verifies the hypothesis of
Proposition~\ref{prop:edge-obstruction}, which rules out a two-point
summand, so $|S|,|T|\ge3$.
\end{proof}

\section{Proof of Theorem~\ref{thm:proper-classification}}
\label{sec:proper}

The cases of orders $1$, $2$, and $4$ have explicit GAP representations.
The subgroup of order $1$ is a rank-zero GAP, and a subgroup of order $2$ is $\{\pm1\}=-1+\{0,2\}$. If $A_k$ has order $4$, then for a generator $i$ one has $i^2=-1$, and
\[
       A_k=\{\pm1,\pm i\}
       =-1+\{0,1-i\}+\{0,1+i\}.
\]

It remains only to exclude order $3$ and orders at least $5$.

\begin{lemma}
\label{lem:three-point-gap}
Every three-element GAP is a three-term arithmetic progression.
\end{lemma}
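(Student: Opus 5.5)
The plan is to work directly from the definition of a GAP: write a three-element GAP as $P=a+[0,L_1-1]d_1+\cdots+[0,L_r-1]d_r$ with $r\ge1$, each $L_i\ge2$ and each $d_i\ne0$. Rank zero gives a singleton, so it is excluded. Each component progression $[0,L_i-1]d_i$ has at least two distinct elements, since $d_i\neq 0$, so every summand of $P$ is nonempty.

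The first step is to show that $P$ contains a two-term progression $\{a,a+d_1\}$ that sits inside a longer structure. Consider the single-direction slice $a+[0,L_1-1]d_1\subseteq P$. Because $d_1\ne0$, the elements $a$ and $a+d_1$ are distinct, and $P$ has only three elements. I will split into two cases according to whether the first component alone already produces three points.

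Case 1: some $L_i\ge3$ and $a,a+d_i,a+2d_i$ are pairwise distinct. Then these three points exhaust $P$, so $P$ is the three-term progression $a+[0,2]d_i$.

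Case 2: no such component exists. This happens either because every $L_i=2$, or because $2d_i=0$ (characteristic $2$, where $a+2d_i=a$). In both situations I look at the two smallest parameter vectors. Take $r\ge2$ and the four points $a$, $a+d_1$, $a+d_2$, $a+d_1+d_2$. Since $|P|=3$, a coincidence must occur among them. We have $a+d_1\ne a$, $a+d_2\ne a$, $a+d_1+d_2\ne a+d_1$ and $a+d_1+d_2\ne a+d_2$. So the possible coincidences are $d_1=d_2$ or $d_1+d_2=0$.
\begin{itemize}
\item If $d_1=d_2=d$, then $P\supseteq\{a,a+d,a+2d\}$. If these three points are distinct, they are all of $P$, and $P$ is a three-term progression.
\item If $d_2=-d_1$, then $P\supseteq\{a-d_1,a,a+d_1\}$, which is again a three-term progression with difference $d_1$, provided $2d_1\ne0$.
\end{itemize}
In every subcase the point set has exactly three elements, and any three-element set of the form $\{b,b+d,b+2d\}$ with distinct elements is an arithmetic progression. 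Orderings are irrelevant, since GAPs are sets.

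The main obstacle is characteristic $2$, where $2d=0$ collapses the candidate progressions. I would handle it as follows. In characteristic $2$ every GAP is a coset of an $\F_2$-span translated, namely $a+\langle d_1,\dots,d_r\rangle_{\F_2}$, because $[0,L-1]d=\{0,d\}$ whenever $L\ge2$. Its size is therefore a power of $2$, so no three-element GAP exists there and the statement holds vacuously. For odd characteristic, the case analysis above covers everything: in rank one, $L_1=3$ forces $P=a+[0,2]d_1$ directly. In higher rank with $|P|=3$, the coincidence argument shows that $P$ contains a three-term progression of size $3$, and that progression must therefore equal $P$.
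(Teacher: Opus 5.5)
Your argument is correct for GAPs in $\F_q$, but it takes a different route from the paper.

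\emph{The paper's route.} It uses the parametrization only through Lemma~\ref{lem:gap-two-point}. It writes $P=T+\{0,d\}$ with $T\subseteq P$, so $2\le|T|\le3$. If $|T|=2$, then $T$ and $T+d$ share exactly one point, which forces $T=\{x,x+d\}$. If $|T|=3$, then $P+d=P$, and translation by $d$ is a fixed-point-free permutation of three points, hence a $3$-cycle. This is uniform in any abelian group and needs no case split on the characteristic.

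\emph{Your route.} You work with the full box $a+\sum_i[0,L_i-1]d_i$. A component with $L_i\ge3$ and $2d_i\ne0$ gives the progression $a+[0,2]d_i$ immediately. Otherwise the four points $a,a+d_1,a+d_2,a+d_1+d_2$ must contain a coincidence, which forces $d_2=\pm d_1$. Characteristic $2$ is handled by the correct observation that GAPs there are cosets of $\F_2$-subspaces, an observation the paper also uses in proving Theorem~\ref{thm:full-groups}.

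\emph{Comparison.} Your approach identifies the common difference explicitly from the GAP data ($d_i$, or $d_1$ when $d_2=\pm d_1$). However, it depends on the ambient group being an $\F_p$-vector space. In a general abelian group you would replace ``characteristic $2$'' by ``some $d_i$ has order $2$'' and note that then $P+d_i=P$ makes $|P|$ even. The paper's argument gets this generality for free and is shorter.

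Two small bookkeeping points:
\begin{itemize}
\item In Case~2 you should state that $r=1$ cannot occur, since it gives $|P|\le2$.
\item Your closing remark that in rank one ``$L_1=3$'' forces the result overlooks $p=3$ with $L_1>3$. That situation is already covered by your Case~1, so nothing breaks.
\end{itemize}
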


\begin{proof}
By Lemma~\ref{lem:gap-two-point}, write the GAP, after translation, as
\[
       P=T+\{0,d\},
       \qquad d\ne0.
\]
Since $T\subseteq P$, one has $2\le|T|\le3$.

If $|T|=2$, then $|T\cup(T+d)|=3$, so $T$ and $T+d$ share exactly one point. Since $d \neq 0$, this forces $T=\{x,x+d\}$ for some $x$, and then $P = \{x,\ x+d,\ x+2d\}$, a three-term progression.

If $|T|=3$, then $T=P$ and $P+d=P$.  Translation by $d$ cyclically permutes
the three points, so again $P=\{x,x+d,x+2d\}$.
\end{proof}

\begin{lemma}
\label{lem:order-three}
A proper multiplicative subgroup of order $3$ is not a GAP.
\end{lemma}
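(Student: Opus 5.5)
By Lemma~\ref{lem:three-point-gap}, it suffices to show that a subgroup $A=\{1,\omega,\omega^2\}$ of order $3$ is not a three-term arithmetic progression. Here $\omega$ is a primitive cube root of unity, so $p\ne3$ and $1+\omega+\omega^2=0$.

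Suppose $A=\{x,x+d,x+2d\}$ with $d\ne0$. The plan is to derive a contradiction in each characteristic.

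First, summing the elements gives
\[
3x+3d=1+\omega+\omega^2=0.
\]
Since $p\ne3$, this forces $x=-d$. Then
\[
A=\{-d,0,d\}\ni0,
\]
which is impossible because $A\subseteq\F_q^\times$. This disposes of every case with $p\ne3$, and $p=3$ cannot occur since then $x^3-1=(x-1)^3$ has no primitive cube root.

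In characteristic $2$ the same computation applies: $3=1$, so $x+d=0$, giving $x=d$ and $A=\{d,0,d\}$. This contains $0$ and is not even a three-element set, so again there is a contradiction.

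I expect no real obstacle here. The only point to check is that the sum identity $1+\omega+\omega^2=0$ holds. It does, because $\omega\ne1$ is a root of $x^3-1=(x-1)(x^2+x+1)$.

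Note also that the argument never uses properness or the minimal-field hypothesis beyond $|A|=3$. This is consistent with the statement, since a group of order $3$ equal to $\F_q^\times$ would require $q=4$, and that case is covered by the same computation.
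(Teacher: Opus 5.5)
Your proof is correct and is essentially the paper's argument: Lemma~\ref{lem:three-point-gap} reduces the question to a three-term progression, and comparing the sum of its terms with $1+\omega+\omega^2=0$ forces the middle term to vanish because $p\ne3$. The only difference is that the paper removes characteristic $2$ up front via the minimal-field convention, whereas your computation covers it directly; your separate characteristic-$2$ paragraph is therefore already subsumed by the first one, since $3\ne0$ there.
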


\begin{proof}
In characteristic $2$, the minimal field containing a subgroup of order $3$ is
$\F_4$, where it is the full multiplicative group, so no proper subgroup of order $3$ occurs. In characteristic $3$ no multiplicative subgroup has order $3$. Thus we may assume
$p \ge 5$.

Let
\[
       A_k=\{1,\omega,\omega^2\},
       \qquad \omega^3=1,
       \quad \omega\ne1.
\]
If $A_k$ were a GAP, Lemma~\ref{lem:three-point-gap} would give distinct
$a,b,c\in A_k$ satisfying $a+c=2b$.  Since
\[
       a+b+c=1+\omega+\omega^2=0,
\]
it follows that $3b=0$, which is impossible because $b\ne0$ and $p\ne3$.
\end{proof}

\begin{proof}[Proof of Theorem~\ref{thm:proper-classification}]
The orders $1$, $2$, and $4$ give the displayed examples.  The case
$|A_k|=3$ is excluded by Lemma~\ref{lem:order-three}.  If $|A_k|\ge5$ and
$A_k$ were a GAP, Lemma~\ref{lem:gap-two-point} would give a two-point
summand, contrary to Theorem~\ref{thm:no-two-point}.
\end{proof}
\section{Proof of Theorem~\ref{thm:full-groups}}
\label{sec:full}

We first isolate the elementary obstruction to proper representations.

\begin{proposition}
\label{prop:punctured-space}
Let $V$ be an $\F_p$-vector space of dimension greater than one, and let
$b\in V$.  Then $V\setminus\{b\}$ is not a proper GAP.
\end{proposition}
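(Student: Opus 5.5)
The plan is to compare Fourier coefficients of the two sides. A proper GAP has a product-form Fourier transform. Every nontrivial Fourier coefficient of a punctured space $V\setminus\{b\}$ has absolute value one.

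Suppose, for contradiction, that
\[
       V\setminus\{b\}=P=a+[0,L_1-1]d_1+\cdots+[0,L_r-1]d_r
\]
is proper, with $r\ge1$ and each $L_i\ge2$; rank zero is excluded because $|V\setminus\{b\}|=p^n-1\ge p^2-1>1$. The first step bounds the lengths: I claim $L_i\le p-1$ for every $i$.
\begin{itemize}
\item Since $pd_i=0$, if $L_i>p$ then the box points $0$ and $p$ in coordinate $i$ give the same element, so the parameter map is not injective, contradicting properness.
\item If $L_i=p$, then $[0,p-1]d_i$ is the whole line $\langle d_i\rangle$, so $P+d_i=P$. Taking complements, $\{b\}+d_i=\{b\}$, which is impossible since $d_i\ne0$.
\end{itemize}

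Next, fix a primitive $p$-th root of unity $\zeta$ and let $\psi$ run over the additive characters of $V$. Because the parametrization is proper, the indicator of $P$ is the image of the indicator of the box, so
\[
       \sum_{x\in P}\psi(x)=\psi(a)\prod_{i=1}^r\Bigl(\sum_{j=0}^{L_i-1}\psi(d_i)^j\Bigr).
\]
For the left side: if $\psi$ is nontrivial, then $\sum_{x\in V\setminus\{b\}}\psi(x)=-\psi(b)$, which is a unit of $\mathbb Z[\zeta]$.

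The factors on the right are of two kinds.
\begin{itemize}
\item If $\psi(d_i)=1$, the factor is the rational integer $L_i\ge2$.
\item If $\psi(d_i)=\zeta^m\ne1$, the factor is $(1-\zeta^{mL_i})/(1-\zeta^m)$. Since $1\le L_i\le p-1$, the exponent $mL_i$ is prime to $p$, so this is a cyclotomic unit of $\mathbb Z[\zeta]$.
\end{itemize}
Consequently, $\prod_{i:\psi(d_i)=1}L_i$ is a unit of $\mathbb Z[\zeta]$. A rational integer that is a unit of $\mathbb Z[\zeta]$ has rational inverse that is an algebraic integer, hence it is $\pm1$. So no $d_i$ can lie in $\ker\psi$ for any nontrivial $\psi$.

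Finally, I use the hypothesis $\dim V\ge2$. Choose a nonzero linear functional $\lambda:V\to\F_p$ with $\lambda(d_1)=0$, which is possible because $d_1$ spans a proper subspace. Set $\psi=\zeta^{\lambda}$. Then $\psi$ is nontrivial and $\psi(d_1)=1$, so the factor $L_1\ge2$ appears in the product, contradicting the previous paragraph.

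The main obstacle I expect is the first step, $L_i\le p-1$. It is exactly what makes every factor with $\psi(d_i)\ne1$ a unit rather than zero or a non-unit, and it relies both on properness and on the punctured structure. The rest is the standard unit computation in $\mathbb Z[\zeta]$. Note that dimension one is genuinely needed to fail here: for $n=1$ every nontrivial $\psi$ is nonzero on every $d_i$, which matches the fact that $\F_p^\times$ is an arithmetic progression.
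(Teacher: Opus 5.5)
Your proof is correct, but it takes a different route from the paper's. The paper argues directly on fiber counts. It fixes one direction $d_i$ and the line $U=\F_p d_i$, and uses properness to say that the translates $c+P_i$ ($c\in a+\sum_{j\ne i}P_j$) partition $V\setminus\{b\}$, each consisting of $L_i$ points of a single coset of $U$. Hence every coset of $U$ meets $V\setminus\{b\}$ in a multiple of $L_i$ points. Since $b+U$ contributes $p-1$ points and any other coset (one exists because $\dim V>1$) contributes $p$, one gets $L_i\mid\gcd(p-1,p)=1$.

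Your argument is the Fourier dual of this. A character with $\psi(d_1)=1$ is a character of $V/U$. Your product formula at such $\psi$ is the transform of the statement that the pushforward of $1_P$ to $V/U$ is $L_1$ times an integer-valued function. The contradiction then comes from $\sum_{x\in V\setminus\{b\}}\psi(x)=-\psi(b)$ being a unit, rather than from the explicit counts $p-1$ and $p$. What the paper's version buys is elementarity and uniformity in $L_i$: the case $L_i=p$ needs no separate treatment.

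Your argument can also be shortened. The first step and the cyclotomic-unit computation, which you flagged as the main obstacle, are unnecessary. Every factor $\sum_{j=0}^{L_i-1}\psi(d_i)^j$ lies in $\mathbb Z[\zeta]$ whatever $L_i$ is. So for $\psi$ with $\psi(d_1)=1$ you get $-\psi(b)=L_1\gamma$ with $\gamma=\psi(a)\prod_{i\ge2}\sum_{j=0}^{L_i-1}\psi(d_i)^j\in\mathbb Z[\zeta]$. Thus $1/L_1=-\psi(-b)\gamma\in\mathbb Z[\zeta]\cap\mathbb Q=\mathbb Z$, contradicting $L_1\ge2$. With that simplification the two proofs rest on the same inputs: properness of the parametrization and the existence of the nontrivial quotient $V/\F_p d_1$. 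They differ only in whether divisibility by $L_1$ is read off from fiber counts or from a single Fourier coefficient.
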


\begin{proof}
Suppose
\[
       V\setminus\{b\}=a+P_1+\cdots+P_r,
       \qquad
       P_i=[0,L_i-1]d_i,
\]
with every element represented uniquely; here $r\ge1$, since
$|V\setminus\{b\}|=|V|-1>1$.

Fix an index $i$ and let $U=\F_pd_i$ be the line through $d_i$.  Unique
representation, applied already within the single progression $P_i$,
shows that $P_i$ has exactly $L_i$ distinct elements, all in $U$.
Grouping the remaining summands, write
\[
       V\setminus\{b\}
       =\bigcup_{c\,\in\,a+\sum_{j\ne i}P_j}(c+P_i),
\]
where each translate $c+P_i$ consists of exactly $L_i$ points of a single
coset of $U$.  Properness says precisely that this union is disjoint: the
translates of $P_i$ partition $V\setminus\{b\}$. Since $P_i\subseteq U$, each translate lies in a single coset:

\[
       c+P_i\subseteq c+U
       \qquad\Bigl(c\in a+\sum_{j\ne i}P_j\Bigr).
\]
Hence the number of points of $V\setminus\{b\}$ in any coset of $U$ is a
multiple of $L_i$.  Now the coset $b+U$ meets $V\setminus\{b\}$ in $p-1$
points, while every other coset meets it in $p$ points, and both kinds of
coset occur because $\dim V>1$.  Thus $L_i$ divides both $p$ and $p-1$,
impossible because $L_i\ge2$.
\end{proof}

\begin{proof}[Proof of Theorem~\ref{thm:full-groups}]
Suppose first that $p$ is odd.  If $f=1$, then
$\F_p^\times=1+[0,p-2]$ is an arithmetic progression.  Assume $f>1$ and
identify the additive group of $\F_{p^f}$ with $V=\F_p^f$, so that
$\F_q^\times$ becomes the punctured space $V\setminus\{0\}$.  By
Proposition~\ref{prop:punctured-space}, $V\setminus\{0\}$ is not a proper
GAP.

It remains to construct an improper representation. The idea
is to begin with a box in which each coordinate omits only the value
$-1$, and then use two-point summands to fill every remaining hole
except one. Let
$I=\F_p\setminus\{-1\} = [0, p-2]$, let $e_1,\ldots,e_f$ be the standard basis, and
put
\[
       u=e_1+\cdots+e_{f-1}+(f-1)e_f,
       \qquad w_i=e_i-e_f\quad(1\le i<f).
\]
Note $u, w_i \neq 0$. We claim that
\begin{equation}
\label{eq:full-group-improper}
       Ie_1+\cdots+Ie_f+\{0,u\}+\sum_{i<f}\{0,w_i\}
       =V\setminus\{-e_f\}.
\end{equation}

Write $I^f=Ie_1+\cdots+Ie_f$, and let $D$ denote the sum of the
two-point summands, so that every $\delta\in D$ has the form
\[
       \delta
       =\varepsilon u+\eta_1w_1+\cdots+\eta_{f-1}w_{f-1},
       \qquad
       \varepsilon,\eta_i\in\{0,1\},
\]
with coordinates
\[
       \delta_i=\varepsilon+\eta_i\quad(i<f),
       \qquad
       \delta_f=(f-1)\varepsilon-\sum_{i<f}\eta_i.
\]
For $x=(x_1,\ldots,x_f)$, put $y_i=x_i+1$.  Since each coordinate of
$I^f$ takes every value except $-1$, we have $x\in I^f+D$ precisely when
some $\varepsilon,\eta_i\in\{0,1\}$ satisfy $\delta_i\ne y_i$ for all
$i$, that is,
\begin{equation}
\label{eq:avoid}
       \varepsilon+\eta_i\ne y_i\quad(i<f),
       \qquad
       (f-1)\varepsilon-\sum_{i<f}\eta_i\ne y_f.
\end{equation}

Fix $\varepsilon$. For each $i<f$, the condition $\varepsilon + \eta_i \neq y_i$ excludes at most one value of $\eta_i$.  If both choices of some $\eta_i$ remain admissible, we choose it last. Then its two values
give distinct last coordinates, so one satisfies the condition on $y_f$.
Failure can therefore occur only when every $\eta_i$ is forced, equivalently
when $y_i\in\{\varepsilon,\varepsilon+1\}$ for all $i<f$.  In that case
$\eta_i=1+\varepsilon-y_i$, and the forced last coordinate is
\[
       (f-1)\varepsilon-\sum_{i<f}\eta_i
       =\sum_{i<f}y_i-(f-1),
\]
which is independent of $\varepsilon$.

If \eqref{eq:avoid} fails for both $\varepsilon=0$ and $\varepsilon=1$,
then $y_i\in\{0,1\}\cap\{1,2\}=\{1\}$ for every $i<f$ (here $p$ is odd), and the forced
last coordinate is $0$.  Hence $y_f=0$ and $x=-e_f$.  Conversely, if $x = -e_f$, then $y_i = 1$ for all $i<f$ and $y_f = 0$, so for each $\varepsilon$ every $\eta_i$ is forced and the forced last coordinate is $0 =  y_f$. Thus \eqref{eq:avoid} fails and $-e_f$ is missed. This proves
\eqref{eq:full-group-improper}.  Translating by $e_f$ gives a GAP
representation of $V\setminus\{0\}$ with $2f$ component progressions.

Finally, let $p=2$.  Every nontrivial component progression is a two-point
set $\{0,d_i\}$, so a positive-rank GAP is a coset of an $\F_2$-subspace
and has power-of-two cardinality.  Since
$|\F_{2^f}^\times|=2^f-1$, the full multiplicative group is a GAP only for
$f=1$, when $\F_2^\times=\{1\}$ is a rank-zero GAP.
\end{proof}

\subsection*{Acknowledgments}
We thank Craig Spencer for many helpful discussions.

\end{document}